\newcommand{\Htwo}{{H^2}}
\newcommand{\RR}{{\mathbb{R}}}
\newcommand{\CC}{{\mathbb{C}}}
\newcommand{\PP}{{\mathbb{P}}}
\newcommand{\opI}{{\mathcal{I}}}
\newcommand{\opA}{{\mathcal{A}}}
\newcommand{\opB}{{\mathcal{B}}}
\newcommand{\opC}{{\mathcal{C}}}
\newcommand{\opR}{{\mathcal{R}}}
\newcommand{\opE}{{\varepsilon}}
\newcommand{\opD}{{\mathcal{D}}}
\newcommand{\opT}{{\mathcal{T}}}
\newcommand{\opP}{{\mathcal{P}}}
\newcommand{\opS}{{\mathcal{S}}}
\newcommand{\opM}{{\mathcal{M}}}
\newcommand{\opK}{{\mathcal{K}}}
\newcommand{\inp}[2]{{\left\langle#1{,}\,#2\right\rangle}}
\title{The Lanczos Tau Framework for Time-Delay Systems:\\
	Padé Approximation and Collocation Revisited%
	\thanks{Preprint version.%
		\funding{This work was supported by KU~Leuven project C14/22/092 and
			by FWO-Flanders under grant number G.092.721N.}}}
      \author{Evert Provoost\thanks{KU~Leuven, Department of Computer Science,
          NUMA Research Unit, B-3001 Leuven, Belgium
          (\email{evert.provoost@kuleuven.be},
          \email{wim.michiels@kuleuven.be}).} \and Wim Michiels\footnotemark[2]}
\begin{document}

\maketitle

\begin{abstract}
	We reformulate the Lanczos tau method for the discretization of
	time-delay systems in terms of a pencil of operators, allowing for
	new insights into this approach.  As a first main result, we show
	that, for the choice of a shifted Legendre basis, this method is
	equivalent to Padé approximation in the frequency domain.  We
	illustrate that Lanczos tau methods straightforwardly give rise to
	sparse, self-nesting discretizations.  Equivalence is also
	demonstrated with pseudospectral collocation, where the non-zero
	collocation points are chosen as the zeroes of orthogonal
	polynomials.  The importance of such a choice manifests itself in
	the approximation of the $H^2$-norm, where, under mild conditions,
	super-geometric convergence is observed and, for a special case,
	super convergence is proved; both of which are significantly faster
	than the algebraic convergence reported in previous work.
\end{abstract}

\begin{keywords}
	delay-differential equations, Lanczos tau methods, spectral methods,
	Padé approximation, rational approximation, $H^2$-norm, matrix
	equations, orthogonal polynomials
\end{keywords}

\begin{MSCcodes}
	65L03, 34K06, 15A24
\end{MSCcodes}

\section{Introduction}\label{sec:intro}
We consider a time-delay system described by
\begin{equation}\label{eq:rdde}
	\begin{aligned}
		\dot{\vb{x}}(t) & = A_0 \vb{x}(t)+ A_1 \vb{x}(t-\tau) + B\vb{u}(t), \\
		\vb{y}(t)       & = C\vb{x}(t),
	\end{aligned}
\end{equation}
where $\tau \in \RR_+$ is the constant delay, $\vb{x}(t) \in \CC^n$
the state variable, $\vb{u}(t) \in \CC^{p}$ the input, and
$\vb{y}(t) \in \CC^q$ the output at time $t$. The transfer function of
this system is given by
\begin{equation}\label{eq:tf}
	G(s)=C\pqty{sI_n - A_0 - A_1 e^{-s\tau}}^{-1} B.
\end{equation}

Due to the presence of time-delay, the information required to define
a forward solution at $t=0$, for a given input, is not determined by
$\vb{x}(0)$, but by the function segment
$[-\tau, 0] \ni \theta \mapsto \vb{x}(\theta)$. More generally, the
solution for all $t \geq t_0$ is uniquely defined by the solution for
the time frame $[t_0 - \tau, t_0]$. Hence, the state at time $t$, in
the natural meaning of minimal information to determine the future
evolution, corresponds to the function segment
$\xi_t : [-\tau, 0] \to \CC^n$, with
$\xi_t(\theta) = \vb{x}(t + \theta)$, which explains why a time-delay
model represents an infinite-dimensional dynamical system.  This
infinite-dimensional nature implies that existing techniques for the
analysis and design of delay-free systems cannot be readily
applied. New methods thus have to be developed, most of which start by
discretizing the infinite-dimensional system into a finite-dimensional
approximation.

A common approach in the frequency domain is to replace the exponential function
by a rational approximation, such as the Padé approximant \citep[see
	e.g.][]{GloverMoC1991}. One can also discretize at the level of the state
space. There are two main variants taking this approach. As the system is linear
and time-invariant, one can approximate the solution operator $\opS_T$, which
maps the function $\xi_t$ to $\xi_{t + T}$. Such an approach is for instance
adopted for linearized stability analysis in the bifurcation analysis package by
\citet{EngelborghsNba2002}, where $\opS_T$ is approximated using a linear
multistep method. Similarly, in the context of stability analysis of periodic
delay-differential equations, \citet{ButcherSol2004} propose to discretize
$\opS_T$ as its action on a Chebyshev series approximation of $\xi_t$, where $T$
is taken to be the period.  The other option to discretize at the state space
level, is to look at the infinitesimal generator $\opA$ of the $C_0$-semigroup
$\{\opS_t\}_{t\ge 0}$, with action
\[
	\mathcal{A} \xi_t = \lim_{T \to 0^+} \tfrac{1}{T} (\opS_T - I) \xi_t.
\]
When the input and output in~\eqref{eq:rdde} are taken into account, this
results in a standard state space description of a delay-free system, which
captures part of the system behaviour of the original system.  The main
advantage of these methods is thus that one can often readily apply existing
techniques for the analysis and design of finite-dimensional systems to this
approximation.  The earliest method in this category, to the best of our
knowledge, is the Lanczos tau method of \citet{ItoLTA1986}, which relies on a
truncated Legendre basis. This approach was extended to other bases, and shown
to perform well when computing the characteristic roots, in
\citet{VyasarayaniSaf2014}.

Another, particularly successful, approach in this class is pseudospectral
collocation, introduced by \citet{BredaPDM2005}. It was initially presented for
the eigenvalue problem, but later successfully extended to construct delay-free
approximations, which can for instance be used for bifurcation analysis
\citep{BredaPDo2016}. This method collocates the action of the infinitesimal
generator on a grid of nodes. In practice these are usually the Chebyshev
extremal nodes, a set of nodes which are distributed more densely near the end
points.  This distribution evades Runge's phenomenon, which one can face in more
naive collocation strategies \citep{BoydCaF2001}.  Pseudospectral collocation is
tightly linked to rational approximation of the exponential, which was for
instance used in the initial paper to prove super-geometric convergence of the
characteristic roots.  Aside from theoretical interest, this link can also be
exploited in practice.  An example is a heuristic by \citet{WuRca2012} to select
a discretization degree such that all the characteristic roots to the right of
the imaginary axis are sufficiently well approximated.

Another application of these discretizations is in the computation of
system norms. An example of particular importance is the $\Htwo$-norm,
which, for an exponentially stable, linear, time-invariant system with
transfer function $G$, is given by
\begin{equation}\label{eq:H2}
	\norm{G}_\Htwo = \pqty{\frac{1}{2\pi} \int_{-\infty}^{+\infty} \norm{G(i\omega)}^2_F \dd{\omega}}^{\frac{1}{2}},
\end{equation}
where $i$ is the imaginary unit and $\norm{A}_F = \sqrt{\tr(AA^*)}$ the
Frobenius norm.  This norm is often used in robust control as a measure of
disturbance rejection and, in the context of model reduction, to quantify the
approximation error at the transfer function level. As it is a global
characteristic, in the sense that it depends on the transfer function's
behaviour along the entire imaginary axis, its computation is rather
challenging. For delay-free systems an efficient method involving the algebraic
Lyapunov equation is well known \citep[Lemma~4.6]{ZhouRao1995}. The natural
extension in the delay setting is the so-called delay Lyapunov equation, a
boundary value problem defining a matrix-valued function. In
\citet{JarlebringCaC2011}, a spectral discretization of this equation is
proposed to compute the $\Htwo$-norm, yielding super-geometric convergence of
this norm, at the cost of $\order{n^6N^3}$ operations, where $N$ is the degree
of the approximation.  Alternatively, \citet{VanbiervlietUsd2011} propose to
instead approximate the system using a pseudospectral discretization and then
compute its $\Htwo$-norm through the standard algebraic Lyapunov equation. This
improves the time complexity to $\order{n^3N^3}$ operations, but reduces the
convergence rate to third order algebraic convergence.

In the final section of this article, we illustrate how using a
Lanczos tau method for the approximation of the system allows us to
recover super-geometric convergence, and sometimes even results in
super convergence, giving us the best of both worlds.  This unexpected
improvement served as the initial motivation to revisit the Lanczos
tau method in this work.

\subsection*{Overview}
After reviewing some preliminaries in \cref{sec:prelim}, we present an
operator pencil formulation of the Lanczos tau framework
(\cref{sec:tau}).  We continue by discussing some properties of these
methods in \cref{sec:props}. In particular, we show how these
naturally lead to sparse, nested discretizations and how they are
deeply connected to other approximations.  We prove equivalence to
pseudospectral collocation, when the non-zero collocation points are
chosen as the zeroes of an orthogonal polynomial, and a surprising
link to Padé approximation, when using shifted Legendre polynomials.
Finally, we conclude by illustrating super-geometric convergence, and
for some cases proving super convergence, of the $\Htwo$-norm in
\cref{sec:H2ex}.

\subsection*{Notation}
Throughout this work we will rely on some classical orthogonal
polynomials shifted to the interval $[-\tau, 0]$. To lighten notation
we shall denote these shifted polynomials by their usual names in
the literature for the interval $[-1, 1]$.  In particular we will use
$T_k$ and $U_k$ to denote the shifted Chebyshev polynomials of the
first and second kind, respectively, and $P^{(\alpha, \beta)}_k$ the
similarly transformed Jacobi polynomials, of which the shifted
Legendre polynomials $P_k$ are a special case.  We give a review of
these polynomials in \cref{sec:orthpol}.

\section{Preliminaries}\label{sec:prelim}
Before presenting the Lanczos tau framework, we review some basic
notions and previous work needed in our later development.

\subsection{The abstract Cauchy problem}\label{sec:acp}
To build towards a discretization of the functional differential
equation~\eqref{eq:rdde}, we detail how one can reformulate it in terms of an
abstract Cauchy problem on an infinite-dimensional vector space, where the
unknown, corresponding to the state, is a function defined over an interval of
length~$\tau$. To handle inputs, we explicitly decouple the current state from
the history (resulting in the so-called `head-tail' representation) as in
\citet{CurtainAit1995}.  More precisely, we consider as state space
\begin{equation}\label{eq:headtailspace}
	X := \CC^{n} \times L^2([-\tau,0]; \CC^n).
\end{equation}

Let $\opA: D(\opA) \to X$ be the differential operator with domain
\[ \textstyle
	D(\opA) = \Bqty{(\vb{z},\zeta) \in X : \zeta \in AC,
		\dv{\theta} \zeta \in L^2, \vb{z} = \zeta(0)}
\]
(where $L^2$ and $AC$ have domain $[-\tau, 0]$ and codomain $\CC^n$)
and action
\[
	\opA\,(\vb{z}, \zeta) = \pqty{\pqty{A_{0}\opE_0 + A_1 \opE_{-\tau}} \zeta, \opD \zeta},
\]
where, for later convenience, we introduce evaluation functionals
$\opE_\theta \zeta = \zeta(\theta)$ and differentiation operator
$\opD \zeta = \dv{\theta} \zeta$.  Next, let operators
$\opB : \CC^p \to X$ and $\opC : X \to \CC^q$ be defined by
\[
	\opB \vb{u} = \pqty{B\vb{u}, \vb{0}} \qq{and} \opC z = C \vb{z},
\]
where $\vb{u} \in \CC^p$ and $z = (\vb{z}, \zeta) \in X$.

We can now rewrite~\eqref{eq:rdde} as the abstract Cauchy problem
\begin{equation}\label{eq:acp}
	\begin{aligned}
		\dot{z}(t) & = \mathcal{A}z(t) + \mathcal{B}\vb{u}(t), \\
		\vb{y}(t)  & = \mathcal{C}z(t),
	\end{aligned}
\end{equation}
where $z(t) = (\vb{z}(t), \zeta_t) \in D(\opA)$.  The relation between
corresponding solutions of~\eqref{eq:rdde} and~\eqref{eq:acp} is then
given by
\[
	\vb{z}(t) = \vb{x}(t) \qq{and} \zeta_t(\theta) = \vb{x}(t+\theta),\quad\forall\theta\in[-\tau, 0].
\]
For a more detailed description of the mapping between
representations, and further detail on the inclusion of input and
output, we refer to \citet{CurtainAit1995}.

\subsection{Pseudospectral collocation}\label{sec:pscrev}
As we will discuss relations between pseudospectral collocation and Lanczos tau
methods, we outline how the system~\eqref{eq:acp}, and thus
also~\eqref{eq:rdde}, can be discretized using the former method. A more
comprehensive treatment is given in \citet{BredaSoL2015}. Given a positive
integer $N$, we consider a mesh $\Omega$ of $N+1$ distinct points in the
interval $[-\tau, 0]$, namely
\begin{equation}\label{eq:pscmesh}
	\Omega = \Bqty{\theta_k : k = 0,\dots,N},
\end{equation}
where
\[
	-\tau\leq\theta_0 < \cdots < \theta_{N-1} < \theta_N = 0.
\]
This allows us to replace the continuous space $X$, defined
in~\eqref{eq:headtailspace}, with the space $X_N$ of discrete
functions defined on the mesh $\Omega$, i.e.\ any tuple
$(\vb{z}, \zeta) \in X$ is approximated by a block vector
$\vb{x}_N \in X_N$, with
\[
	\vb{x}_{N,k} = \zeta(\theta_k), \quad k = 0,\dots,N-1, \qq{and} \vb{x}_{N,N} = \vb{z}.
\]

Let $\opP \vb{x}_N$ be the unique $\CC^n$-valued interpolating
polynomial of degree at most $N$, satisfying
\[
	(\opP \vb{x}_N) (\theta_k) = \vb{x}_{N,k},\quad k=0,\dots,N.
\]
This way we can approximate the operator $\mathcal{A}$ by the
finite-dimensional operator $\mathcal{A}_N : X_N \to X_N$, defined by
\[
	\begin{cases}
		\pqty{\mathcal{A}_N \vb{x}_N}_k = \pqty{\opD \opP \vb{x}_N}(\theta_k), \quad k= 0, \dots, N-1, \\
		\pqty{\mathcal{A}_N \vb{x}_N}_N= \pqty{A_0\opE_0 + A_1\opE_{-\tau}} \pqty{\opP \vb{x}_N}.
	\end{cases}
\]
Note that in doing so, we implicitly enforce the boundary condition of
the `head-tail' representation, namely
$\pqty{\opP \vb{x}_N}(0) = \vb{x}_{N,N} = \vb{z}$, where
$\opP \vb{x}_N$ can be seen as the approximation of $\zeta$.

Using the Lagrange representation of $\mathcal{P} \vb{x}_N$,
\[
	\pqty{\opP \vb{x}_N}(\theta) = \sum_{k=0}^N \vb{x}_{N,k}\,\ell_k(\theta),
\]
where the Lagrange polynomials $\ell_k$ are those real-valued
polynomials of degree $N$ satisfying $\ell_k(\theta_j) = \delta_{jk}$,
with $\delta_{jk}$ the usual Kronecker delta, one can get an explicit
matrix expression
\[
	\mathcal{A}_N= \pmqty{[\underline{\opD}] \\ \vb{a}},
\]
where $[\underline{\opD}]$ consists of the first $N$ block rows of the
$(N+1) \times (N+1)$ differentiation matrix
\[
	{[\opD]}_{jk} = I_n \ell'_k(\theta_j),
\]
and $\vb{a}$ is a block row vector with
\[
	\vb{a}_k = A_0 \ell_k(0) + A_1 \ell_k(-\tau),
\]
where $j = 0,\dots,N$ and $k = 0,\dots,N$.

In the same way we can approximate $\mathcal{B}$ and $\mathcal{C}$ by
\[
	\mathcal{B}_N = \pmqty{\vb{0} & \cdots & \vb{0} & B^T}^T \qq{and} \mathcal{C}_N = \pmqty{\vb{0} & \cdots & \vb{0} & C}.
\]
As such, we arrive at a finite-dimensional approximation of~\eqref{eq:rdde}
\begin{equation}\label{eq:pscsys}
	\begin{aligned}
		\dot{\vb{x}}_N(t) & = \mathcal{A}_N \vb{x}_N(t) + \mathcal{B}_N \vb{u}(t),\ \\
		\vb{y}_N(t)       & = \mathcal{C}_N \vb{x}_N(t).
	\end{aligned}
\end{equation}
We can thus also approximate the transfer function~\eqref{eq:tf} by
\begin{equation}\label{eq:psctf}
	J_N(s)=\mathcal{C}_N\pqty{sI_{n(N+1)} - \mathcal{A}_N}^{-1} \mathcal{B}_N.
\end{equation}
The following result on the structure of this approximation was
proved by \citet{GumussoyApc2010}.
\begin{proposition}\label{prop:pscrat}
	The transfer function~\eqref{eq:psctf} satisfies
	\[
		J_N(s) = C\pqty{sI_n - A_0- A_1 p_N(s, -\tau)}^{-1}B,
	\]
	where the function
	\[
		[-\tau, 0] \ni \theta \mapsto p_N(s, \theta)
	\]
	is the unique polynomial of degree $N$ satisfying
	\[
		\begin{cases}
			p_N(s, 0) = 1, \\
			p_N'(s, \theta_k) = s p_N(s, \theta_k),\quad k = 0,\dots,N-1.
		\end{cases}
	\]
	Furthermore, $p_N(s, \theta)$ is a rational function of $s$ for all
	$\theta$.
\end{proposition}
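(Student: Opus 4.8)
The plan is to evaluate~\eqref{eq:psctf} directly by solving the resolvent equation $(sI_{n(N+1)} - \opA_N)\vb{v} = \opB_N\vb{u}$ for $\vb{v} = (\vb{v}_0,\dots,\vb{v}_N)$ and then reading off the output $\opC_N\vb{v}$. The essential simplification comes from passing back to polynomials: writing $q := \opP\vb{v}$ for the interpolating polynomial of degree at most $N$, so that $\vb{v}_k = q(\theta_k)$, the block structure of $\opA_N$, $\opB_N$, and $\opC_N$ turns the $n(N+1)$ scalar equations into a collocation problem for $q$ together with a single boundary relation.

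Carrying this out, the first $N$ block rows read $s\,q(\theta_k) = (\opD\opP\vb{v})(\theta_k) = q'(\theta_k)$ for $k = 0,\dots,N-1$, since the corresponding blocks of $\opB_N\vb{u}$ vanish. Using $\theta_N = 0$, and hence $\vb{v}_N = q(0)$, the last block row becomes $s\,q(0) = A_0 q(0) + A_1 q(-\tau) + B\vb{u}$, that is
\[
	\pqty{sI_n - A_0}q(0) - A_1 q(-\tau) = B\vb{u},
\]
while the output is $\opC_N\vb{v} = C\vb{v}_N = C q(0)$. Everything therefore hinges on expressing $q(0)$ through $\vb{u}$.

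The key step is that the $N$ collocation conditions $q'(\theta_k) = s\,q(\theta_k)$ act componentwise and identically on the $n$ entries of $q$. Hence, once $q(0)$ is fixed, each component is determined as a scalar multiple of a single scalar polynomial, giving $q(\theta) = p_N(s,\theta)\,q(0)$, where $p_N(s,\cdot)$ is the degree-$N$ polynomial solving the same conditions normalised by $p_N(s,0) = 1$. In particular $q(-\tau) = p_N(s,-\tau)\,q(0)$, so the boundary relation collapses to
\[
	\pqty{sI_n - A_0 - A_1 p_N(s,-\tau)}q(0) = B\vb{u}.
\]
Solving for $q(0)$ and applying $C$ reproduces the claimed expression for $J_N(s)$. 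Rationality of $p_N(s,\theta)$ in $s$ then follows from Cramer's rule: expanding $p_N(s,\cdot)$ in a fixed polynomial basis, the normalisation yields one $s$-independent equation and each collocation condition $p'(\theta_k) - s\,p(\theta_k) = 0$ yields an equation affine in $s$, so the coefficients solve a linear system whose matrix has polynomial-in-$s$ entries.

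The part requiring care is the well-definedness of the reduction rather than the algebra. The decoupling presupposes that $p_N(s,\cdot)$ exists and is unique, i.e.\ that the $(N+1)\times(N+1)$ system defining its coefficients is nonsingular. I would verify that its determinant, a polynomial in $s$, is not identically zero by examining the top-order coefficient: there each collocation row contributes the evaluation $p(\theta_k)$, and together with the normalisation this reduces to the (nonsingular) interpolation system at the $N+1$ distinct nodes $\theta_0,\dots,\theta_{N-1},0$. Hence $p_N$ is defined for all but finitely many $s$; the identity holds wherever both $J_N$ and $p_N$ are defined, and since both sides are rational matrix functions of $s$ agreeing on an open dense set, it extends to all $s$ by analytic continuation.
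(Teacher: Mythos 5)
Your proof is correct, but note a structural point: the paper never proves this proposition itself---it is imported from \citet{GumussoyApc2010}---so the natural in-paper comparison is with the proof of its tau-method counterpart, \cref{prop:taurat}. Your argument mirrors that proof almost exactly: the homogeneous block rows of the resolvent equation (here the $N$ collocation conditions $q'(\theta_k)=s\,q(\theta_k)$, there the truncated-differentiation conditions) force the Laplace-domain solution into the one-dimensional form $q = p_N(s,\cdot)\,q(0)$, after which the remaining boundary row collapses to the $n\times n$ delay-free resolvent $\pqty{sI_n - A_0 - A_1 p_N(s,-\tau)}q(0)=B\vb{u}$ and the output map reads off $Cq(0)$. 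Where your write-up goes beyond the paper's level of detail is on the only delicate point, well-posedness of $p_N$: the leading-coefficient (Vandermonde) argument showing the defining $(N+1)\times(N+1)$ system has determinant of exact degree $N$ in $s$, hence $p_N$ exists and is unique for all but finitely many $s$, with the identity then extended by rational continuation. The paper's proof of \cref{prop:taurat} waves this away with ``by expanding $r_N(s,\cdot)$ in a basis, it can easily be seen that it is uniquely defined,'' so your treatment is, if anything, the more careful one; the only caveat worth flagging is that, strictly speaking, uniqueness of $p_N(s,\cdot)$ holds only for generic $s$, which your continuation step already accommodates.
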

The effect of approximating~\eqref{eq:rdde} by~\eqref{eq:pscsys} can
thus be interpreted, in the frequency domain, as the effect of
approximating the exponential function $s \mapsto e^{-s\tau}$
in~\eqref{eq:tf} by the rational function $s \mapsto p_N(s, -\tau)$.

A common choice of mesh points in the literature consists of scaled
and shifted Chebyshev extremal points, that is,
\[
	\textstyle \theta_k = -\frac{\tau}{2} \pqty{\cos(\frac{\pi k}{N}) + 1},\quad k=0,\dots,N.
\]
The choice of this mesh is motivated by the resulting fast convergence of the
eigenvalues of $\mathcal{A}_N$ to the corresponding characteristic roots
of~\eqref{eq:rdde}. More specifically, in \citet[Theorem 3.6]{BredaPDM2005} it
is proved that super-geometric accuracy, i.e.\ approximation error
$\order{N^{-N}}$, is obtained using these nodes.

To conclude this section, and to introduce the operator approach of
\cref{sec:tau}, note that we can rewrite $\vb{a}$ in terms of a block
vector expression of the evaluation functionals, namely
\[
	\vb{a} = A_0 [\opE_0] + A_1 [\opE_{-\tau}],
\]
where ${[\opE_\theta]}_k = I_n \ell_k(\theta)$. Similarly, we have for
$\mathcal{C}_N$
\[
	C \pmqty{\vb{0} & \cdots & \vb{0} & I_n} = C [\opE_0].
\]

\subsection{Orthogonal polynomials}\label{sec:orthpol}
The Lanczos tau framework presented in the next section will rely on the notion
of a degree-graded series of polynomials orthogonal with respect to an inner
product $\inp{\cdot}{\cdot}$, with induced norm
$\norm{\phi} = \sqrt{\inp{\phi}{\phi}}$.\footnote{We assume the convention that
	$\inp{\cdot}{\cdot}$ is linear in the first argument and antilinear in the
	second.} That is, a set of polynomials $\{\phi_k\}_{k=0}^\infty$ which has as
defining property that $\phi_k$ is of degree $k$ and $\inp{\phi_j}{\phi_k} = 0$
if and only if $j\neq k$.  Usually, the inner product chosen is of the form
\[
	\inp{\phi_j}{\phi_k} = \int_{-\tau}^0  \phi_j(\theta) \overline{\phi_k(\theta)} \, w(\theta) \dd{\theta},
\]
with $w(\theta) \ge 0$, $\forall \theta \in [-\tau, 0]$, the weight
function.  The choice of $w$, together with a normalization condition,
then uniquely defines the orthogonal sequence.

Throughout this work we will use the Jacobi polynomials
$P^{(\alpha, \beta)}_k$ shifted to the interval $[-\tau, 0]$, which
are given by the weight function
\[ \textstyle
	w(\theta) = \pqty{-\frac{2}{\tau}\theta}^\alpha \pqty{\frac{2}{\tau}\theta + 2}^\beta,
\]
and normalization condition
$P^{(\alpha, \beta)}_k(0) = \binom{k+\alpha}{k}$.  As special cases we
have
\[ \textstyle
	T_k = \binom{k - \frac{1}{2}}{k}^{-1} P_k^{\pqty{-\frac{1}{2}, -\frac{1}{2}}} \qq{and}
	U_k = (k + 1) \binom{k + \frac{1}{2}}{k}^{-1} P_k^{\pqty{\frac{1}{2}, \frac{1}{2}}},
\]
the shifted Chebyshev
polynomials of, respectively, the first and second kind, and the
shifted Legendre polynomials
\[
	P_k = P_k^{(0,0)}.
\]
A thorough overview of the properties of these and many other
orthogonal polynomials is given by \citet{SzegoOP1939}.

Finally, we note that, in practice, Chebyshev polynomials are
generally preferred for the approximation of a function
$f : [-\tau, 0] \to \CC$ by a truncated series
\[
	f(\theta) \approx \sum_{k=0}^N \frac{\inp{f}{\phi_k}}{\norm{\phi_k}^2} \phi_k(\theta),
\]
as fast convergence in $N$ is guaranteed for sufficiently smooth functions.  In
particular, \citet[Corollary~2]{MastroianniJoo1995} showed that functions with
$m - 1$ absolutely continuous derivatives, and the $m$th derivative of bounded
variation, give $m$th order algebraic decay of the coefficients of this series.
Additionally, \citet[p.~94]{BernsteinSld1912} proved that for a function which
is analytically continuable to an ellipse in the complex plane, this improves to
geometrical decrease $\order{\rho^N}$, with $0 < \rho < 1$ determined by to the
size of the ellipse.  In the limiting case where the function is entire, this
becomes super-geometric decrease. We will use such a truncated series in the
next section.

\section{The Lanczos tau framework}\label{sec:tau}
We start by selecting an inner product $\inp{\cdot}{\cdot}$ on the space $\PP$
of polynomials $[-\tau, 0] \to \CC$. Let $\Bqty{\phi_k}_{k=0}^\infty$ be a
degree-graded sequence of orthogonal polynomials with respect to this inner
product, as in the previous section. Obviously, for any $N$,
$\Phi_N = \Bqty{\phi_k}_{k=0}^N$ is an orthogonal basis for $\PP_N \subset \PP$,
the space of polynomials of degree at most $N$.  Rather than replacing $X$ by
the discrete space $X_N$, as in \cref{sec:pscrev}, in order to arrive at an
approximation of~\eqref{eq:acp}, and thus~\eqref{eq:rdde}, we will now replace
it by the space $\PP_N^n$, the space of polynomials of degree at most $N$ that
map to $\CC^n$.  The operation of differentiation, encapsulated in the action of
the operator $\mathcal{A}$ in~\eqref{eq:acp}, reduces the degree of a polynomial
with one.  On the left hand side we will thus also have to map an element of
$\PP_N^n$ to an element of $\PP_{N-1}^n$. The idea of \citet{LanczosTIo1938} was
to do so by truncating the series expansion, which was later applied to
functional differential equations by \citet{ItoLTA1986}; it is their method
which we will reformulate as an operator pencil.  For an orthogonal sequence
this truncation namely corresponds to the component-wise orthogonal projector
$\opT_{N-1}$, with action
\[ \textstyle
	\pqty{\opT_{N-1} \xi}_j = {(\xi)}_j - \frac{\inp{{(\xi)}_j}{\phi_N}}{\norm{\phi_N}^2} \phi_N,\quad j=1,\dots,n.
\]

Then letting, as in \cref{sec:acp}, $\opE_\theta$ denote the
evaluation functional in $\theta$, $\opD$ the component-wise
differentiation operator, and $\vb{0}$ the zero polynomial, we propose
the following approximation of~\eqref{eq:rdde}:
\begin{equation}\label{eq:tauop}
	\begin{aligned}
		\pmqty{\opE_0                      \\ \opT_{N-1}}  {\textstyle\dv{t}} \xi_{tN} &= \pmqty{
		A_0 \opE_0 + A_1 \opE_{-\tau}      \\
		\opD} \xi_{tN} + \pmqty{B          \\ \vb{0}} \vb{u}(t), \\
		\vb{y}_N(t) & = C \opE_0 \xi_{tN},
	\end{aligned}
\end{equation}
where $\xi_{tN} \in \PP_N^n$.  The relation between solutions
of~\eqref{eq:tauop} and solutions of~\eqref{eq:rdde}
and~\eqref{eq:acp} can then be described by
\[
	\vb{x}(t) = \vb{z}(t) \approx  \xi_{tN}(0) \qq{and} \zeta_t \approx  \xi_{tN}.
\]

Note that the evolution equation~\eqref{eq:tauop} is in an implicit
form. To show that solutions of the corresponding initial value
problem exist and are uniquely defined, we derive a matrix-vector
representation, induced by expressing elements of $\PP_N^n$ in the basis
$\Phi_N$. That is, $\xi_{tN} = \sum_{k=0}^N \vb{x}_{N,k}(t) \phi_k$,
with $\vb{x}_{N,k}(t) \in \mathbb{C}^n$, $k=0,\dots,N$. Doing so, we
get as block matrix realization of the operators
\[
	\textstyle
	{[\opE_\theta]}_k = I_n \phi_k(\theta), \quad
	{[\opD]}_{jk} = I_n \frac{\inp{\phi'_k}{\phi_j}}{\norm{\phi_j}^2}, \qq{and}
	\textstyle
	{[\opT_{N-1}]}_{jk} = I_n \frac{\inp{\phi_k}{\phi_j}}{\norm{\phi_j}^2} = I_n \delta_{jk},
\]
where $[\,\cdot\,]$ signifies the expression in coordinates,
$j = 0,\dots,N-1$, and $k = 0,\dots,N$. This then gives the explicit
state space realization
\begin{equation}\label{eq:taumatrix}
	\begin{aligned}
		\mathcal{E}_N \dot{\vb{x}}_N(t) & = \mathcal{A}_N \vb{x}_N(t) + \mathcal{B}_N \vb{u}(t), \\
		\vb{y}_N(t)                     & = \mathcal{C}_N \vb{x}_N(t),
	\end{aligned}
\end{equation}
with
\begin{gather*}
	\mathcal{E}_N                 = \pmqty{[\opE_0]                          \\ [\opT_{N-1}]} = \pmqty{
		\phi_0(0)\ \cdots\ \phi_{N-1}(0) & \phi_N(0)                                  \\
		I_{N-1}                        & \vb{0}} \otimes I_n, \\
	\mathcal{A}_N                = \pmqty{A_0[\opE_0] + A_1[\opE_{-\tau}]   \\ [\opD]}, \quad
	\mathcal{B}_N                   = \pmqty{B                                 \\ \vb{0}}, \qq{and}
	\mathcal{C}_N                   = C[\opE_0].
\end{gather*}
This matrix realization is also amenable to implementation. However, note that
the elements of $[\opD]$ should generally not be computed explicitly; see the
book by \citet{BoydCaF2001} for better approaches.

Due to a basic property of orthogonal polynomials, the zeroes of
$\phi_k$ are located in the open interval
$(-\tau, 0)$ \citep[Theorem~3.3.1]{SzegoOP1939}, so it holds that
$\phi_N(0)\neq 0$. Hence, the matrix $\mathcal{E}_N$ is always
invertible.  The invertibility of its matrix expression also implies
that
\[
	\pmqty{\opE_0 \\ \opT_{N-1}}^{-1}
\]
is a well defined operator from $\CC^n \times \PP_{N-1}^n$ to
$\PP_N^n$, and forward solutions of~\eqref{eq:tauop} are thus uniquely
defined.

Taking the Laplace transform of~\eqref{eq:tauop} gives
\begin{equation}\label{eq:taulaplace}
	\begin{aligned}
		s\pmqty{\opE_0                                 \\ \opT_{N-1}} \hat{\xi}_{sN} &= \pmqty{A_0\opE_0 + A_1\opE_{-\tau} \\ \opD} \hat{\xi}_{sN} + \pmqty{B \\ \vb{0}} \hat{\vb{u}}(s), \\
		\hat{\vb{y}}_N(s) & = C \opE_0 \hat{\xi}_{sN},
	\end{aligned}
\end{equation}
where $\hat{f}(s)$ is the transform of $f(t)$.  In this way, we arrive
at an expression of the transfer function of~\cref{eq:tauop}
\begin{equation}\label{eq:tautf}
	G_N(s)=  C \opE_0 \pmqty{s \opE_0 - A_0 \opE_0 - A_1 \opE_{-\tau} \\  s \opT_{N-1} - \opD}^{-1}  \pmqty{B \\ \vb{0}}.
\end{equation}
To conclude this section, we provide a counterpart to \cref{prop:pscrat}.
\begin{proposition}\label{prop:taurat}
	The transfer function~\eqref{eq:tautf} satisfies
	\[
		G_N(s) = C\pqty{s I_n - A_0 - A_1 r_N(s, -\tau)}^{-1}B,
	\]
	where the function
	\[
		[-\tau, 0] \ni \theta \mapsto r_N(s, \theta)
	\]
	is the unique polynomial of degree $N$ satisfying
	\[
		\begin{cases}
			r_N(s, 0) = 1, \\
			\opD r_N(s, \,\cdot\,) = s\opT_{N-1} r_N(s, \,\cdot\,).
		\end{cases}
	\]
	Furthermore, $r_N(s, \theta)$ is a rational function of $s$ for all
	$\theta$.
\end{proposition}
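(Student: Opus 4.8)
The plan is to mimic the proof of \cref{prop:pscrat}: I read the transfer function off the resolvent of the operator pencil in \eqref{eq:tautf}. Write $\opM(s)$ for the block operator $\PP_N^n \to \CC^n \times \PP_{N-1}^n$ appearing there, so that $G_N(s)$ is obtained by solving $\opM(s)\hat\xi_{sN} = \pmqty{B\hat{\vb u}(s) \\ \vb 0}$ and setting $\hat{\vb y}_N(s) = C\opE_0\hat\xi_{sN}$. The bottom block of this system reads $\opD\hat\xi_{sN} = s\opT_{N-1}\hat\xi_{sN}$; since $\opD$ and $\opT_{N-1}$ act component-wise, every scalar component of $\hat\xi_{sN}$ lies in $\ker(\opD - s\opT_{N-1}) \subset \PP_N$. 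The heart of the argument is to show that, for all but finitely many $s$, this kernel is one-dimensional and is spanned by a polynomial that does not vanish at $\theta = 0$. Granting this, normalising the generator to the value $1$ at $\theta = 0$ defines $r_N(s,\cdot)$ uniquely, and each component of $\hat\xi_{sN}$ is a scalar multiple of it, which forces the factorised form $\hat\xi_{sN}(\theta) = r_N(s,\theta)\,\vb v$ with $\vb v \in \CC^n$.

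To make this precise I would pass to coordinates in the basis $\Phi_N$ and introduce the square $(N+1)\times(N+1)$ matrix
\[
	\mathcal{N}(s) = \pmqty{[\opE_0] \\ [\opD] - s[\opT_{N-1}]},
\]
whose rows encode the normalisation $r_N(s,0) = 1$ and the $N$ relations $\opD r_N = s\opT_{N-1} r_N$ acting on the coefficient vector of $r_N(s,\cdot)$. Its entries are affine in $s$, so $\det \mathcal N(s)$ is a polynomial in $s$; the only way to collect $N$ factors of $s$ is to take the diagonal $-s$ entries of $[\opT_{N-1}]$ in all $N$ lower rows together with the entry $\phi_N(0)$ from the top row, so the coefficient of $s^N$ equals $\pm\phi_N(0)$. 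As already noted, the zeroes of $\phi_N$ lie in $(-\tau, 0)$, whence $\phi_N(0) \neq 0$ and $\det\mathcal N(s) \not\equiv 0$. Thus $\mathcal N(s)$ is invertible for all but at most $N$ values of $s$. For every such $s$, invertibility forces the bottom $N$ rows to have full rank $N$, so $\ker(\opD - s\opT_{N-1})$ is exactly one-dimensional, and no nonzero kernel element can also satisfy $r(0) = 0$; together these give the one-dimensionality and non-vanishing required above. The coefficient vector is then $\mathcal N(s)^{-1}\pmqty{1 \\ \vb 0}$, which is rational in $s$ by Cramer's rule, establishing that $r_N(s,\theta)$ is a rational function of $s$. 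A short computation with the strictly upper triangular leading block of $[\opD]$ (differentiation lowers the degree, so $\inp{\phi_k'}{\phi_j} = 0$ for $k \leq j$) additionally shows that the $\phi_N$-coefficient of $r_N$ is nonzero, i.e.\ it has exact degree $N$.

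It then remains to assemble the formula. Substituting $\hat\xi_{sN}(\theta) = r_N(s,\theta)\,\vb v$ into the top block and using $r_N(s,0) = 1$ turns it into $\pqty{sI_n - A_0 - A_1 r_N(s,-\tau)}\vb v = B\hat{\vb u}(s)$, while the output becomes $\hat{\vb y}_N(s) = C\opE_0\hat\xi_{sN} = C r_N(s,0)\vb v = C\vb v$. Eliminating $\vb v$ yields $G_N(s) = C\pqty{sI_n - A_0 - A_1 r_N(s,-\tau)}^{-1}B$, valid wherever $\opM(s)$ is invertible and extending as an identity of rational functions. I expect the main obstacle to be precisely the structural claim about $\ker(\opD - s\opT_{N-1})$ --- its one-dimensionality and the non-vanishing of its generator at $\theta = 0$ --- since this is what legitimises the factorised ansatz; once it is in hand through the determinant computation for $\mathcal N(s)$, the remaining steps are routine substitutions paralleling \cref{prop:pscrat}.
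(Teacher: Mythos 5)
Your proof is correct and follows essentially the same route as the paper's: both factorise the solutions of the homogeneous bottom block of \eqref{eq:taulaplace} as $\hat{\xi}_{sN} = r_N(s,\,\cdot\,)\,\vb{z}(s)$, substitute into the top block and the output equation, and solve for $\vb{z}(s)$. Your determinant computation for $\mathcal{N}(s)$ simply makes explicit what the paper dismisses as ``easily seen'' --- uniqueness of $r_N$, one-dimensionality of $\ker(\opD - s\opT_{N-1})$ for all but finitely many $s$, and rationality --- which the paper instead reads off the coordinate representation \eqref{eq:taurnop} and \cref{prop:ratexp}.
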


\begin{proof}
	By expanding $r_N(s, \,\cdot\,)$ in a basis, it can easily be seen
	that it is uniquely defined. The second row of the top expression
	in~\eqref{eq:taulaplace} corresponds to a set of homogeneous
	equations, each of which has $r_N(s, \,\cdot\,)$ as a solution, by
	the latter's definition. As a consequence, the solutions of this set
	are of the form $\hat{\xi}_{sN} = r_N(s, \,\cdot\,) \vb{z}(s)$, with
	$\vb{z}(s) \in \CC^n$.  Substituting this form in the first row of
	the top equation leads us to
	\[
		s\vb{z}(s) = A_0 \vb{z}(s) + A_1 r_N(s, -\tau) \vb{z}(s) + B\hat{\vb{u}}(s),
	\]
	while the output equation becomes $\hat{\vb{y}}_N(s) = C
		\vb{z}(s)$.  The assertions follow from solving for $\vb{z}(s)$.
\end{proof}

From the definition of $r_N$ we get the compact representation
\begin{equation}\label{eq:taurnop}
	r_N(s, \theta) = \opE_\theta \pmqty{\opE_0 \\ s\opT_{N-1} - \opD}^{-1} \pmqty{1 \\ \vb{0}}.
\end{equation}
Using this representation we can derive an explicit rational form, as
stated in the following result.
\begin{proposition}\label{prop:ratexp}
	The rational function~\eqref{eq:taurnop} is given by the explicit expression
	\[
		r_N(s, \theta) = \frac{\sum_{k=0}^N \phi_N^{(N-k)}(\theta) \, s^k}{\sum_{k=0}^N \phi_N^{(N-k)}(0) \, s^k},
	\]
	where $\phi^{(k)}$ is the $k$th derivative of $\phi$.
\end{proposition}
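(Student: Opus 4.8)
The plan is to avoid inverting the operator pencil in~\eqref{eq:taurnop} directly and instead lean on the clean characterization of $r_N$ supplied by \cref{prop:taurat}: it is the unique polynomial in $\PP_N$ satisfying the normalization $r_N(s,0)=1$ together with the differential--truncation identity $\opD r_N(s,\cdot)=s\,\opT_{N-1}r_N(s,\cdot)$. I would therefore write the proposed formula as a candidate, verify it meets both conditions, and conclude by uniqueness. Set $q_N(s,\theta)=\sum_{k=0}^N \phi_N^{(N-k)}(\theta)\,s^k$ for the numerator, so the candidate is $q_N(s,\theta)/q_N(s,0)$; the normalization is immediate, since evaluating at $\theta=0$ makes numerator and denominator coincide.

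The content is in the second identity. As the denominator $q_N(s,0)$ does not depend on $\theta$, it suffices to prove $\partial_\theta q_N = s\,\opT_{N-1}q_N$. First I would differentiate $q_N$ term by term using $\partial_\theta \phi_N^{(N-k)}=\phi_N^{(N-k+1)}$. The $k=0$ summand vanishes because $\phi_N^{(N+1)}=0$, and after the index shift $k\mapsto k-1$ the derivative becomes $s\sum_{j=0}^{N-1}\phi_N^{(N-j)}(\theta)\,s^j$, that is, $s$ times $q_N$ with its top ($k=N$) summand $\phi_N(\theta)\,s^N$ deleted.

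It then remains to recognize this deleted summand as exactly the component that $\opT_{N-1}$ strips off. This is the one step that needs a genuine observation, namely a degree count: for $k<N$ the summand $\phi_N^{(N-k)}$ is the $(N-k)$th derivative of a degree-$N$ polynomial and hence has degree $k<N$, so it lies in $\PP_{N-1}$ and contributes nothing to the $\phi_N$-coefficient of $q_N$. Consequently that coefficient comes solely from the $k=N$ term $\phi_N(\theta)\,s^N$ and equals $s^N$, whence, by the definition of the truncation projector, $\opT_{N-1}q_N = q_N - s^N\phi_N$. This is precisely the expression produced by $\partial_\theta q_N$ above, so $\partial_\theta q_N = s\,\opT_{N-1}q_N$ and both defining conditions hold.

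Finally I would note that the candidate indeed lies in $\PP_N$, its leading term in $\theta$ being $s^N\phi_N(\theta)/q_N(s,0)$, so the uniqueness assertion of \cref{prop:taurat} identifies it with $r_N(s,\theta)$ and the explicit expression follows. The main obstacle, such as it is, is purely organizational: choosing to verify the closed form rather than to invert the pencil, and spotting that the telescoping of derivatives under $\partial_\theta$ mirrors exactly the single-term truncation performed by $\opT_{N-1}$.
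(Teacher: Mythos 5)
Your proof is correct, but it takes a different route from the paper's. The paper works directly with the operator representation~\eqref{eq:taurnop}: it expresses the pencil in the derivative basis $\{\phi_N^{(N-k)}\}_{k=0}^N$, observes that $\opE_0$ gives the full top row while $s\opT_{N-1}-\opD$ becomes a bidiagonal block (diagonal $s$, superdiagonal $-1$), and then solves the resulting companion-type linear system against the first unit vector; the geometric progression of the solution coefficients yields the formula by direct computation. You instead avoid inverting anything: you posit the closed form as a candidate, check the normalization $r_N(s,0)=1$, and verify the identity $\opD q_N = s\,\opT_{N-1}q_N$ via the shift action of $\partial_\theta$ on the derivative basis together with the degree count showing $\opT_{N-1}q_N = q_N - s^N\phi_N$, concluding by the uniqueness in \cref{prop:taurat}. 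The two arguments exploit the same underlying structure (differentiation shifts the derivative basis; truncation deletes exactly the $\phi_N$-component), but the paper's is a derivation that discovers the formula, whereas yours is a verification that presupposes it; yours is slightly lighter on linear algebra and reuses already-established uniqueness, at the cost of not explaining where the expression comes from. One small point worth making explicit in your version: the candidate is only well defined where the denominator $q_N(s,0)=\sum_{k=0}^N\phi_N^{(N-k)}(0)s^k$ is non-zero, which holds for all but finitely many $s$ since its leading coefficient $\phi_N(0)$ is non-zero (the zeroes of $\phi_N$ lie in the open interval $(-\tau,0)$); this suffices because both sides are rational in $s$.
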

\begin{proof}
	By expressing~\eqref{eq:taurnop} in the derivative basis
	$\{\phi_N^{(N-k)}\}^N_{k=0}$, we obtain the companion matrix
	representation
	\[
		r_N(s, \theta) = \pmqty{\phi_N^{(N)}(\theta) \\ \phi_N^{(N-1)}(\theta) \\ \vdots \\ \phi_N(\theta)}^T \pmqty{\phi_N^{(N)}(0) & \phi_N^{(N-1)}(0) & \cdots & \phi_N^{(1)}(0) & \phi_N(0) \\
			s &              -1 &           & &  \\
			&           s &              -1 &           &          \\
			&            &               \ddots &        \ddots &          \\
			&                      &                &        s &        -1  \\
		}^{-1} \pmqty{1 \\ 0 \\ \vdots \\ 0},
	\]
	from which the assertion follows directly.
\end{proof}
Note that, generally, the coefficients of this expression grow rapidly
with $N$. For numerical reasons, solving~\eqref{eq:taurnop} or using
the state space realization is usually preferred in implementation.

\section{Properties}\label{sec:props}
We continue by showing several links to previously proposed methods,
which can, partially, be unified under the Lanczos tau operator
framework. Additionally, we present how Lanczos tau methods naturally
lead to nested, sparse discretizations.

\subsection{Relation to pseudospectral collocation}
Since Lanczos tau methods for differential equations are well known to
correspond to collocation in the zeroes of the truncated
polynomial \citep{LanczosTIo1938}, a similar intimate connection
between pseudospectral collocation and the approximation scheme of the
previous section is expected; the following result holds.
\begin{theorem}\label{thm:eq}
	Assume that the non-zero mesh points of $\Omega$, as defined
	in~\eqref{eq:pscmesh}, are chosen as the zeroes of $\phi_N$, that is
	\[
		\phi_N(\theta_k)=0,\quad k = 0, \dots, N-1.
	\]
	Then for any $N$, the finite-dimensional
	approximations~\eqref{eq:pscsys} and~\eqref{eq:tauop} are
	equivalent, i.e.\ $J_N(s) = G_N(s)$.
\end{theorem}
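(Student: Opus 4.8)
The plan is to reduce the operator-level identity $J_N(s) = G_N(s)$ to the equality of two scalar polynomials in $\theta$. By \cref{prop:pscrat,prop:taurat}, both transfer functions have the form $C\pqty{sI_n - A_0 - A_1 f(s, -\tau)}^{-1} B$, with $f = p_N$ for the collocation scheme and $f = r_N$ for the Lanczos tau scheme. Hence it suffices to show $p_N(s, \theta) = r_N(s, \theta)$ for every $s$ and $\theta$. Since each is, by construction, the \emph{unique} polynomial of degree $N$ in $\theta$ satisfying its respective defining conditions, and both share the normalisation $p_N(s,0) = r_N(s,0) = 1$, I would establish the identity by verifying that $r_N$ also satisfies the collocation conditions characterising $p_N$, and then invoke uniqueness.

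First I would rewrite the tau condition $\opD r_N(s,\cdot) = s\,\opT_{N-1} r_N(s,\cdot)$ in a form that exposes the role of $\phi_N$. Expanding $r_N(s,\cdot) = \sum_{k=0}^N c_k(s)\,\phi_k$ in the orthogonal basis, the projector $\opT_{N-1}$ acts componentwise and removes precisely the top term, so that $r_N(s,\cdot) - \opT_{N-1} r_N(s,\cdot) = c_N(s)\,\phi_N$. Subtracting $s\,r_N$ from both sides of the tau condition then gives
\[
	\opD r_N(s,\cdot) - s\, r_N(s,\cdot) = -s\, c_N(s)\, \phi_N,
\]
that is, the degree-$N$ polynomial $\opD r_N - s\,r_N$ is forced to be a scalar multiple of $\phi_N$.

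The key step then invokes the hypothesis. Under the assumption that the non-zero mesh points are the zeroes of $\phi_N$, we have $\phi_N(\theta_k) = 0$ for $k = 0,\dots,N-1$, so the right-hand side above vanishes at each such $\theta_k$. Evaluating at these nodes yields $r_N'(s,\theta_k) = s\, r_N(s,\theta_k)$ for $k = 0,\dots,N-1$, which, together with $r_N(s,0) = 1$, are exactly the conditions defining $p_N$ in \cref{prop:pscrat}. By the uniqueness asserted there, $r_N(s,\cdot) = p_N(s,\cdot)$, and consequently $J_N(s) = G_N(s)$.

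I expect the only real subtlety to lie in correctly interpreting the projector: one must recognise that $\opT_{N-1}$ acts componentwise on the orthogonal expansion, so that $r_N - \opT_{N-1} r_N$ isolates the $\phi_N$-coefficient and $\opD r_N - s\,r_N$ is proportional to $\phi_N$ rather than merely vanishing at $N$ arbitrary points. Once this is in place, the link between the zeroes of $\phi_N$ and the collocation nodes is immediate, and the degree count matches cleanly, since $\phi_N$ has exactly $N$ simple zeroes in $(-\tau,0)$ by \citet[Theorem~3.3.1]{SzegoOP1939}. The remainder is bookkeeping and a single appeal to uniqueness.
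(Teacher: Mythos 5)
Your proof is correct, but it takes a genuinely different route from the paper's. You work in the frequency domain: after reducing, via \cref{prop:pscrat,prop:taurat}, to the scalar claim $p_N(s,\cdot) = r_N(s,\cdot)$, you observe that the tau condition forces the residual
\[
	\opD r_N(s,\cdot) - s\, r_N(s,\cdot) = -s\,c_N(s)\,\phi_N
\]
to be a multiple of $\phi_N$, so that evaluating at the zeroes of $\phi_N$ recovers exactly the collocation conditions defining $p_N$, and uniqueness of $p_N$ finishes the argument. The paper instead works in the time domain: it rewrites the collocation system~\eqref{eq:pscsys} as operator conditions on the evolution of a polynomial $\eta_{tN} \in \PP_N^n$ (via the Lagrange basis), shows by the very same mechanism---the truncation residual is proportional to $\phi_N$, hence vanishes at the mesh points---that the tau conditions on $\xi_{tN}$ imply those collocation conditions, and concludes by uniqueness of the flow. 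The underlying one-line observation is identical; what differs is the level at which uniqueness is invoked. The paper's argument is self-contained (it does not route through the transfer-function characterizations, in particular not through \cref{prop:pscrat}, which is quoted from earlier literature) and it establishes that the two schemes define the \emph{same dynamical system} on $\PP_N^n$ up to a change of basis, which is the stronger form of equivalence the paper implicitly uses later (e.g.\ in \cref{sec:selfnest}). Your argument, as written, establishes only equality of the transfer functions---which is the literal claim $J_N(s)=G_N(s)$---but it has the advantage of producing the explicit identity $p_N \equiv r_N$ of the underlying approximants of the exponential, a reusable fact in its own right (for instance, it transfers the Padé property of \cref{thm:pade} directly to collocation at shifted Legendre zeroes).
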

\begin{proof}
	Note that~\eqref{eq:pscsys} can also be derived from the relations
	\[
		\begin{aligned}
			\textstyle
			 & \begin{cases}
				   \opE_0 \dv{t} \eta_{tN} = \pqty{A_0 \opE_0 + A_1 \opE_{-\tau}} \eta_{tN}+ B \vb{u}(t), \\
				   \textstyle
				   \opE_{\theta_k} \dv{t} \eta_{tN} = \opE_{\theta_k}\opD \eta_{tN}, \quad k = 0,\dots, N-1,
			   \end{cases} \\
			 & \quad\vb{y}_N(t) = C \opE_0 \eta_{tN},
		\end{aligned}
	\]
	with $\eta_{tN} \in \PP_N^n$, by expressing $\eta_{tN}$ in the
	Lagrange basis with respect to~\eqref{eq:pscmesh}.  Additionally,
	using the definition of $\opT_{N-1}$, the bottom row of the first
	equation in~\eqref{eq:tauop} reads
	\[ \textstyle
		\pqty{\dv{t} \xi_{tN}}_j - \inp{\pqty{\dv{t} \xi_{tN}}_j}{\phi_N} \frac{\phi_N}{\norm{\phi_N}^2}  =  \pqty{\opD \xi_{tN}}_j, \quad j = 1,\dots,n,
	\]
	which, under the above conditions on $\Omega$, implies
	\[ \textstyle
		\opE_{\theta_k} \dv{t} \xi_{tN} = \opE_{\theta_k} \opD \xi_{tN}, \quad k = 0, \dots, N-1.
	\]
	Hence, the conditions imposed on the evolution of the polynomial $\xi_{tN}$,
	as a function of $t$, imply the conditions imposed on the evolution of the
	polynomial $\eta_{tN}$. But since each set of conditions uniquely defines
	the flow, they must be equivalent.
\end{proof}

This connection allows one to reuse the large number of results that
were developed for collocation-based methods. In particular, we
recover super-geometric convergence of the eigenvalues when using a
Chebyshev basis of the first or second kind, as this corresponds to
using the zeroes of $T_N$ or $U_N$ as the non-zero collocation
points \citep[Theorem~5.1]{BredaSoL2015}. Such a result is not
unexpected as the conditions in \cref{prop:pscrat,prop:taurat}, in the
limit, define the function $\theta \mapsto e^{s \theta}$. These
methods are thus grounded in the approximation of the exponential by a
polynomial, obtained through collocation or the truncation of a
series, respectively.  From the discussion in \cref{sec:orthpol}, and
analogous results for interpolation through Chebyshev nodes,
super-geometric convergence is observed for this function as it is
entire.

\subsection{Sparse, nested discretizations}\label{sec:selfnest}
From a degree-graded orthogonal sequence $\{\phi_k\}_{k=0}^\infty$, we
can trivially form a sequence of nested bases
$\Phi_0 \subset \Phi_1 \subset \dots$ for
$\PP_0 \subset \PP_1 \subset \dots$ respectively. As a consequence, it
is easy to construct a matrix realization of~\eqref{eq:tauop} which is
also nested, in the sense that the resulting matrices for degree $N_1$
are submatrices of those at degree $N_2$, for $N_1 < N_2$. In fact,
the matrices in~\eqref{eq:taumatrix} have this property.

This nesting of state space representations can be exploited in Krylov
algorithms for characteristic roots computation and model reduction of time
delay systems of high dimension, as in the infinite Arnoldi method introduced by
\citet{JarlebringAKM2010}. This leads to far cheaper and far more flexible
methods, as this nesting allows the reuse of previous computations whilst
adaptively changing the discretization degree.

As an example, the self-nesting discretization initially derived for
this purpose in the above article is based on collocation in zero and
the zeroes of $U_N$.  By \cref{thm:eq} we can thus recast this in the
framework of \cref{sec:tau}, as this corresponds, up to a basis
transform, to the choice $\Phi_N = \Bqty{U_k}_{k=0}^N$.

Furthermore, note that when expressing~\eqref{eq:tauop} in a basis, it is not
necessary to use $\Phi_N$ as basis for the input side of the operators (on the
contrary, using $\Phi_N$ to represent the output yields neater representations
of $\opT_{N-1}$ and is thus generally preferred).  One can, for instance, choose
the Chebyshev polynomials of the first kind on the input side and those of the
second kind as $\Phi_N$. This leads to a highly sparse representation of $\opD$,
with $\order{N}$ non-zeroes instead of $\order{N^2}$, as exploited in the
ultraspherical method introduced by \citet{OlverAFa2013}, yielding even more
computational gains.  In fact, the matrix representation of
\citet{JarlebringAKM2010} corresponds to this choice, up to a scaling of the
rows.

\subsection{Relation to Padé approximation}\label{sec:pade}
The Padé approximant of $e^s$ near zero and the Legendre polynomials are well
known to be linked, as shown by \citet{AhmadTop1997}.  In the context of the
approximation of time-delay systems, such a connection has also been
demonstrated by \citet{BajodekFds2021}, which inspires a potential similar link
for the Lanczos tau framework.  Such a connection indeed turns out to exist, as
shown by the following result.

\begin{theorem}\label{thm:pade}
	For the choice $\Phi_N = \Bqty{P_k}_{k=0}^N$, the rational function
	\[
		s \mapsto r_N(s, -\tau),
	\]
	as in \cref{prop:taurat}, is an $(N,N)$ Padé approximant of
	$e^{-\tau s}$ near zero.
\end{theorem}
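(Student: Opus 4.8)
The plan is to start from the explicit rational expression for $r_N(s,\theta)$ furnished by \cref{prop:ratexp}. With the choice $\Phi_N = \Bqty{P_k}_{k=0}^N$ of shifted Legendre polynomials, this reads
\[
	r_N(s,\theta) = \frac{\sum_{k=0}^N P_N^{(N-k)}(\theta)\, s^k}{\sum_{k=0}^N P_N^{(N-k)}(0)\, s^k},
\]
so that $r_N(s,-\tau)$ is completely determined by the values of the derivatives $P_N^{(N-k)}$ at the two endpoints $\theta = 0$ and $\theta = -\tau$ of the interval. The entire argument therefore reduces to computing these endpoint derivatives and recognising the resulting rational function in $s$ as a known object.

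First I would pass to the standard interval. Writing $x = 1 + 2\theta/\tau$, so that $\theta = 0$ maps to $x = 1$ and $\theta = -\tau$ maps to $x = -1$, the shifted Legendre polynomial satisfies $P_N(\theta) = L_N(x)$, where $L_N$ denotes the classical Legendre polynomial on $[-1,1]$, and the chain rule gives $P_N^{(j)}(\theta) = (2/\tau)^j L_N^{(j)}(x)$. The key analytic ingredient is then the classical endpoint derivative identity $L_N^{(j)}(1) = \frac{(N+j)!}{2^j\, j!\, (N-j)!}$, combined with the parity relation $L_N^{(j)}(-1) = (-1)^{N-j} L_N^{(j)}(1)$ that follows from $L_N(-x) = (-1)^N L_N(x)$. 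Substituting $j = N-k$ yields $P_N^{(N-k)}(0) = (2/\tau)^{N-k}\,\frac{(2N-k)!}{2^{N-k}(N-k)!\,k!}$ together with $P_N^{(N-k)}(-\tau) = (-1)^k P_N^{(N-k)}(0)$.

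Inserting these into the formula and cancelling the factors $(2/\tau)^N$ from the two sums, and the factor $2^{-N}$ that arises from combining $2^{-(N-k)}$ with the $2^{-k}$ hidden in $(s\tau/2)^k$, I expect to arrive, with $u = s\tau$, at
\[
	r_N(s,-\tau) = \frac{\sum_{k=0}^N \frac{(2N-k)!}{(N-k)!\,k!}\,(-u)^k}{\sum_{k=0}^N \frac{(2N-k)!}{(N-k)!\,k!}\,u^k}.
\]
The final step is to recognise the right-hand side, up to the constant factor $N!/(2N)!$ common to numerator and denominator, as precisely the closed form of the diagonal $(N,N)$ Padé approximant of $e^z$ evaluated at $z = -s\tau$. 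Because this closed form is known to satisfy the defining order-$(2N+1)$ contact condition $Q(z)e^z - P(z) = \order{z^{2N+1}}$, because the substitution $z \mapsto -s\tau$ is linear and hence preserves both the degrees and the contact order at $s = 0$, and because the Padé approximant is unique, the claim that $s \mapsto r_N(s,-\tau)$ is an $(N,N)$ Padé approximant of $e^{-\tau s}$ follows at once.

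The main obstacle here is not conceptual but lies in the bookkeeping of the middle paragraph: one must track the scaling powers $(2/\tau)^{N-k}$, the $2^{-(N-k)}$ sitting inside the Legendre endpoint formula, and the sign $(-1)^k$ produced by the parity relation, and verify that these conspire to leave exactly the symmetric numerator/denominator pair above. The single genuinely external fact required is the endpoint derivative identity for Legendre polynomials; once the rescaling is set up, everything else is elementary and the identification with the standard Padé closed form is immediate.
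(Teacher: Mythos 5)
Your proof is correct, but it takes a genuinely different route from the paper's. The paper never computes the coefficients of $r_N$ explicitly: it verifies the defining moment conditions $\bqty{\dv[n]{s} r_N(s,-\tau)}_{s=0} = (-\tau)^n$ for all $n \le 2N$ directly from the operator representation~\eqref{eq:taurnop}, by writing the $n$th derivative as $n!\,\opE_{-\tau}\opM_N^n f_0$ (with $f_0$ the constant function $1$), splitting $\opM_N = \opI + \opK_N$ into plain integration plus a correction, and showing---via the Legendre integration identity and $P_k(-\tau) = (-1)^k$---that every term involving $\opK_N$ vanishes upon evaluation at $-\tau$ as long as $n \le 2N$. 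You instead evaluate the coefficients of \cref{prop:ratexp} in closed form using the classical endpoint identity $L_N^{(j)}(1) = \frac{(N+j)!}{2^j\,j!\,(N-j)!}$ together with parity, and then match the resulting rational function against the known closed form of the diagonal Padé approximant of $e^z$, finishing with uniqueness and the invariance of the contact order under the linear substitution $z = -\tau s$. This is precisely the alternative the paper itself flags after its corollary on the explicit expression: the agreement with expression~(3.4) of \citet{AhmadTop1997} ``can thus be used to give an alternative proof of \cref{thm:pade}, be it more indirect.'' In effect you prove that corollary first, from classical facts, and deduce the theorem from it, reversing the paper's logical order. The trade-off: the paper's argument is self-contained within its operator framework and needs no prior knowledge of the Padé closed form, while yours is a shorter computation but imports two external classical identities. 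Your displayed formula for $r_N(s,-\tau)$ checks out; only your verbal bookkeeping is slightly off---$(2/\tau)^{N-k}$ is $k$-dependent, so it cannot be ``cancelled from the two sums'' as a common factor; the correct accounting is $(2/\tau)^{N-k}2^{-(N-k)} = \tau^{-(N-k)}$, after which $\tau^{-N}$ factors out of both sums---but this does not affect the validity of the final expression or the conclusion.
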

\begin{proof}
	From \cref{prop:taurat} we know that $r_N(s, -\tau)$ is a rational
	function of (at most) type $(N, N)$. The defining property of a Padé
	approximant of this type, and thus what we must show, is that the
	first $2N + 1$ moments match those of the exponential at zero, i.e.\
	\[  \textstyle
		\bqty{\dv[n]{s} r_N(s, -\tau)}_{s=0} = \bqty{\dv[n]{s} e^{-\tau s}}_{s=0}= {(-\tau)}^n \quad \forall n \le 2N.
	\]

	As the operators involved in~\eqref{eq:taurnop} linearly map between
	finite-dimensional spaces, we can apply the analogue of the
	derivative of an inverse matrix, yielding
	\begin{align*} \textstyle
		\bqty{\dv[n]{s} r_N(s, -\tau)}_{s=0} &
		\textstyle
		= \bqty{-\dv[n - 1]{s}\, \opE_{-\tau} \spmqty{\opE_0                                               \\ s\opT_{N-1} - \opD}^{-1} \spmqty{\vb{0}^* \\ \opT_{N-1}} \spmqty{\opE_0 \\ s\opT_{N-1} - \opD }^{-1} \spmqty{1 \\ \vb{0}}}_{s=0} \\
		                                     & = \bqty{n!\,\pqty{-1}^n\, \opE_{-\tau} \bqty{\spmqty{\opE_0 \\ s\opT_{N-1} - \opD}^{-1} \spmqty{\vb{0}^* \\ \opT_{N-1}}}^n \spmqty{\opE_0 \\ s\opT_{N-1} - \opD }^{-1} \spmqty{1 \\ \vb{0}}}_{s=0} \\
		                                     & = n!\, \opE_{-\tau} \bqty{\spmqty{\opE_0                    \\ \opD}^{-1} \spmqty{\vb{0}^* \\ \opT_{N-1}}}^n \spmqty{\opE_0 \\ -\opD }^{-1} \spmqty{1 \\ \vb{0}},
	\end{align*}
	with $\vb{0}^* f = 0$. Let
	$\opM_N = \spmqty{\opE_0 \\ \opD}^{-1} \spmqty{\vb{0}^* \\
			\opT_{N-1}}$ and note that
	\[
		(\opM_N f)(\theta) = \int_0^\theta \bqty{f(\xi) - \inp{f}{P_N}P_N(\xi)} \dd{\xi}.
	\]
	Additionally, for
	$f_0 = \spmqty{\opE_0 \\ -\opD }^{-1} \spmqty{1 \\ \vb{0}}$ we
	have $f_0(\theta) = 1$, hence
	\[ \textstyle
		\bqty{\dv[n]{s} r_N(s, -\tau)}_{s=0} = n! \, \opE_{-\tau} \opM_N^n f_0.
	\]
	It thus remains to show that for $n \le 2N$,
	\[
		n!\,\opE_{-\tau} \opM_N^n f_0 = {(-\tau)}^n.
	\]

	The operator $\opM_N$ maps from $\PP_N$ to $\PP_N$. If we embed this space
	in the space $\PP$ of polynomials of arbitrary degree, we can split
	$\opM_N = \opI + \opK_N$, where
	$(\opI f)(\theta) = \int_0^\theta f(\xi) \dd{\xi}$ and
	$(\opK_N f)(\theta) = -\int_0^\theta \inp{f}{P_N}P_N(\xi) \dd{\xi}$.  We
	know that
	$\frac{2}{\tau}(2k+1)P_k(\theta) = \dv{\theta} \pqty{P_{k+1}(\theta) -
			P_{k-1}(\theta)}$ for $k \ge 1$ \citep[consequence of][eq.\
		4.7.29]{SzegoOP1939}, hence, by the fundamental theorem of calculus and the
	fact that $P_{k+1}(0) = P_{k-1}(0) = 1$,
	\[
		\opK_N f = -\frac{\inp{f}{P_N}}{\frac{2}{\tau}(2k+1)}\pqty{P_{N + 1} - P_{N - 1}}.
	\]
	As $P_N$ is orthogonal to all polynomials of degree less than $N$,
	we have
	\begin{equation}\label{eq:pdprf1}
		\opK_N g = \vb{0},\quad \forall g \in \PP_{N-1}.
	\end{equation}

	Note that ${(\opI + \opK_N)}^n$ is the sum of all possible
	sequences of $\opI$ and $\opK_N$ of length $n$.  As a consequence
	of~\eqref{eq:pdprf1}, we know that any subsequence of operators at
	most $N$ long and containing $\opK_N$, gives the zero function
	when applied to $f_0$. Thus ${(\opI + \opK_N)}^n f_0$ consists of
	$(\opI^n f_0)(\theta) = \frac{\theta^n}{n!}$ and terms of the form
	$\opI^m \opK_N f$, with $m < n - N$.  Applying the same
	integration rule for $P_k$ from before, we get the following forms
	for $\opI^m \opK_N f$.
	\medskip
	\begin{center}
		\begin{tabular}{cc}
			\toprule
			$m$      & expression                                              \\
			\midrule
			0        & $\alpha (P_{N+1} - P_{N-1})$                            \\
			1        & $\beta (P_{N+2} - P_N) + \gamma(P_N - P_{N-2})$         \\
			$\vdots$ & $\vdots$                                                \\
			$N-1$    & $\chi (P_{2N} - P_{2N-2}) + \cdots + \omega(P_2 - P_0)$ \\
			\bottomrule
		\end{tabular}
	\end{center}
	\medskip
	As $P_k(-\tau) = {(-1)}^k$ \citep[eq.\ 4.1.4]{SzegoOP1939}, we get
	\[
		\opE_{-\tau} \opI^m \opK_N f = 0, \quad \forall m < N.
	\]
	Since $n \le 2N$ we have $m < 2N - N = N$, thus the last result holds, hence
	\[
		n! \, \opE_{-\tau} \opM_N^n f_0 = n! \, \opE_{-\tau} \opI^n f_0 = {(-\tau)}^n, \quad \forall n \le 2N,
	\]
	which is precisely what had to be shown.
\end{proof}
This result and \cref{prop:taurat} then readily give the following.
\begin{corollary}\label{cor:padeintf}
	For the choice $\Phi_N = \Bqty{P_k}_{k=0}^N$, the Lanczos tau
	discretization~\eqref{eq:tauop} is a state space representation of
	the transfer function of the original system~\eqref{eq:tf} where
	the exponential $s \mapsto e^{-\tau s}$ is replaced by a Padé
	approximant of type $(N, N)$ around zero.
\end{corollary}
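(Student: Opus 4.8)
The plan is to assemble the statement directly from the two results it is built upon, since together they do essentially all of the work. First I would recall that the Laplace-transform computation of \cref{sec:tau} identifies the transfer function of the discretization~\eqref{eq:tauop} as the rational matrix $G_N$ in~\eqref{eq:tautf}, and that \cref{prop:taurat} rewrites this as
\[
	G_N(s) = C\pqty{s I_n - A_0 - A_1 r_N(s, -\tau)}^{-1} B.
\]
I would then place this next to the transfer function~\eqref{eq:tf} of the original system, $G(s) = C\pqty{s I_n - A_0 - A_1 e^{-s\tau}}^{-1} B$, and observe that the two differ in exactly one place: the scalar factor $e^{-s\tau}$ weighting $A_1$ has been replaced by $r_N(s, -\tau)$.

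The second and final step is to identify what this replacement is. Here I would invoke \cref{thm:pade}: under the hypothesis $\Phi_N = \Bqty{P_k}_{k=0}^N$, the function $s \mapsto r_N(s, -\tau)$ is the $(N,N)$ Padé approximant of $e^{-\tau s}$ about the origin. Substituting this identification into the displayed expression for $G_N$ shows that $G_N$ is precisely $G$ with $e^{-s\tau}$ replaced by its $(N,N)$ Padé approximant, and since $G_N$ is by construction the transfer function of the system~\eqref{eq:tauop} (equivalently~\eqref{eq:taumatrix}), this is exactly the assertion of the corollary.

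There is no genuine obstacle in this argument; all of the substance lives in \cref{prop:taurat,thm:pade}, and the corollary is a matter of reading off their joint consequence. The only point deserving a word of care is the phrase ``state space representation'': one should note that~\eqref{eq:tauop} does realise $G_N$ as a (generalized) state space system, the pencil $(\mathcal{E}_N, \mathcal{A}_N)$ being regular because $\mathcal{E}_N$ is invertible, as established just after~\eqref{eq:taumatrix}. With this in hand, so that it is legitimate to speak of its transfer function, the substitution is immediate.
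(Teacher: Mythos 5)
Your argument is exactly the paper's: the corollary is stated there as an immediate consequence of \cref{prop:taurat} and \cref{thm:pade} (``This result and \cref{prop:taurat} then readily give the following''), which is precisely the combination you carry out. Your additional remark on the regularity of the pencil, via the invertibility of $\mathcal{E}_N$ established after~\eqref{eq:taumatrix}, is a sound (if optional) justification for speaking of the transfer function at all.
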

Of course, an analogous result holds for pseudospectral collocation,
via \cref{thm:eq}.

Finally, we can use this link and \cref{prop:ratexp} to give an
explicit expression for the resulting Padé approximant.
\begin{corollary}
	The $(N, N)$ Padé approximant of $s \mapsto e^{-\tau s}$ around zero
	has as rational expression
	\[
		\frac{\sum_{k=0}^N P_N^{(N-k)}(-\tau) \, s^k}{\sum_{k=0}^N P_N^{(N-k)}(0) \, s^k}.
	\]
\end{corollary}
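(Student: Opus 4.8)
The plan is to obtain the statement by simply chaining the two immediately preceding results, which together already determine the approximant completely. By \cref{thm:pade}, for the choice $\Phi_N = \Bqty{P_k}_{k=0}^N$ of shifted Legendre polynomials, the function $s \mapsto r_N(s, -\tau)$ is precisely the $(N,N)$ Padé approximant of $e^{-\tau s}$ near zero. Hence it suffices to write down $r_N(s, -\tau)$ explicitly for this particular basis, and the corollary reduces to a substitution.

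To do so, I would invoke \cref{prop:ratexp}, which furnishes, for an arbitrary degree-graded orthogonal sequence $\{\phi_k\}$, the closed form
\[
	r_N(s, \theta) = \frac{\sum_{k=0}^N \phi_N^{(N-k)}(\theta)\, s^k}{\sum_{k=0}^N \phi_N^{(N-k)}(0)\, s^k}.
\]
Specializing to $\phi_N = P_N$ and evaluating at $\theta = -\tau$ produces exactly the numerator $\sum_{k=0}^N P_N^{(N-k)}(-\tau)\, s^k$ and denominator $\sum_{k=0}^N P_N^{(N-k)}(0)\, s^k$ appearing in the statement, completing the argument.

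The main point to be careful about is not computational but a matter of bookkeeping: one must confirm that the two cited results refer to the same rational function. \cref{prop:ratexp} is stated for a general orthogonal basis, so substituting the Legendre choice is legitimate, while \cref{thm:pade} certifies that this particular $r_N(s,-\tau)$ \emph{is} the Padé approximant in question. Since the $(N,N)$ Padé approximant is unique whenever it exists, the rational expression obtained in this way is unambiguous, and no separate verification of moment matching is needed here, that work having already been carried out in the proof of \cref{thm:pade}.
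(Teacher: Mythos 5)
Your proposal is correct and matches the paper's intended argument exactly: the corollary is obtained by combining \cref{thm:pade} (which identifies $r_N(s,-\tau)$ for the shifted Legendre basis as the $(N,N)$ Padé approximant) with the explicit formula of \cref{prop:ratexp} specialized to $\phi_N = P_N$ and $\theta = -\tau$. Your added remark on the uniqueness of the Padé approximant is a sensible piece of bookkeeping, but no different route is taken.
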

Note that this expression of the Padé approximant is identical to
expression~(3.4) in \citet{AhmadTop1997}. This correspondence can thus
be used to give an alternative proof of \cref{thm:pade}, be it more
indirect than the approach presented here.

Whilst the results preceding this subsection straightforwardly extend to more
general settings such as multiple or distributed delays, those of this
subsection are unique to the single delay case. Indeed, if we replace
$A_0 \opE_0 + A_1 \opE_{-\tau}$ by some other bounded linear functional
in~\eqref{eq:tauop}, we see by the reasoning of \cref{prop:taurat} that
$\theta \mapsto r_N(s, \theta)$ will be evaluated in multiple points, not just
$-\tau$. From \cref{prop:ratexp}, however, we know that the poles of $r_N$ are
independent of $\theta$, hence an analogue of \cref{cor:padeintf} cannot hold,
as the $(N,N)$ Padé approximants of e.g.\ $s \mapsto e^{-s}$ and
$s \mapsto e^{-2s}$ do not have common poles.

\section{An application: computing the $\Htwo$-norm}\label{sec:H2ex}
We conclude this article by illustrating some unexpected improvements when using
a Lanczos tau method for the computation of the $\Htwo$-norm, as defined
by~\eqref{eq:H2}, of~\eqref{eq:rdde}.\footnote{The codes used to produce the
	figures in this section are available at
	\url{https://doi.org/10.5281/zenodo.12088674}.}  For this problem two methods
have been proposed in the past. One approximates the so-called delay Lyapunov
equation \citep{JarlebringCaC2011}; the other uses a state space discretization,
namely pseudospectral collocation in Chebyshev extremal
nodes \citep{VanbiervlietUsd2011}.  Whilst the latter has a lower time
complexity, $\order{n^3N^3}$ instead of $\order{n^6N^3}$, it only achieves third
order algebraic convergence compared to the super-geometric rate of the former.

We will see here that the choice of discretization can have a profound
impact on the convergence of the second method.  In particular, a
Lanczos tau method satisfying the following assumption appears to
recover super-geometric convergence and sometimes even displays super
convergence, i.e.\ it gives the exact result for all $N$ larger than
some finite $N_0$, as we will demonstrate in \cref{prop:superconv}.
\begin{assumption}\label{assum:oddeven}
	The basis function $\phi_k$ is symmetric when $k$ is even and
	antisymmetric when $k$ is odd, i.e.\
	$\phi_k(-\tau - \theta) = {(-1)}^k \phi_k(\theta)$,
	$\forall \theta \in [-\tau, 0]$.
\end{assumption}
Note that when shifted from the interval $[-\tau, 0]$ to the interval
$[-\frac{\tau}{2}, \frac{\tau}{2}]$, symmetric and antisymmetric
functions correspond to the classical notion of even and odd
functions, respectively.

For many of the bases used in this paper, this property is a direct consequence
of the defining weight function being symmetric \citep[eq.\ 2.3.3]{SzegoOP1939}.
The usual pseudospectral collocation method using Chebyshev extremal points does
not satisfy any analogous property, as the non-zero collocation points are not
symmetric. The less common Chebyshev zeroes extended with the node $0$, however,
are, per \cref{thm:eq}, equivalent to a Lanczos tau method that does satisfy
this symmetry condition.

The idea of \citet{VanbiervlietUsd2011}, here translated to the Lanczos tau
method, is simple: approximate the $\Htwo$-norm of the system~\eqref{eq:rdde} by
the $\Htwo$-norm of the approximation~\eqref{eq:tauop}. As is well
known \citep[Lemma~4.6]{ZhouRao1995} we can compute the latter, if the resulting
system is exponentially stable, as
\[
	\norm{G_N}_\Htwo = \sqrt{\tr(\mathcal{C}_NV \mathcal{C}_N^T)},
\]
where the matrix $V \in \CC^{nN\times nN}$ is the solution of the generalized
symmetric Lyapunov equation, in the notation of~\eqref{eq:taumatrix},
\begin{equation}\label{eq:symlyap}
	\mathcal{A}_N V \mathcal{E}_N^T + \mathcal{E}_N V \mathcal{A}_N^T = -\mathcal{B}_N \mathcal{B}_N^T.
\end{equation}

We can reinterpret this matrix equation in terms of operations on
polynomials, which will turn out to be useful in explaining some of
the results in the remainder of this section. To this end, note that
we can identify the solution $V$ by a bivariate polynomial
\[ \textstyle
	U(\theta, \theta') = \sum_{j=0}^N \sum_{k=0}^N V_{jk}\, \phi_j(\theta)\phi_k(\theta') \in \CC^{n \times n},
\]
with $V_{jk}$ the relevant $n \times n$ subblock of $V$.  Formally, as
our choice of coordinates with respect to the basis $\Phi_N$ imposes
a bijection between $\CC^{nN}$ and $\PP_N^n$, we similarly have an
induced bijection between the tensor spaces
$\CC^{nN \times nN} \cong \CC^{nN} \otimes \CC^{nN}$ and
$\PP_N^n \otimes \PP_N^n$ via the basis $\Phi_N \otimes \Phi_N$.

Inspired by this bijection, we extend our operator notation to
bivariate polynomials, to have left multiplication mean application to
the first variable and transposed right multiplication to be
application to the second variable, i.e.\ we have
$\opE_\theta U \opE_{\theta'}^T := U(\theta, \theta')$.  This notation
is justified by the tensor nature of $U$. Indeed, denoting by
$[\,\cdot\,]$ the matrix realization as in~\cref{sec:tau}, we get
\[ \textstyle
	\opE_\theta U \opE_{\theta'}^T = [\opE_\theta] V [\opE_{\theta'}]^T \in \CC^{n \times n}.
\]
We then have the following result.
\begin{proposition}\label{prop:delaylyap}
	The $\Htwo$-norm of the transfer function~\eqref{eq:tautf} of the
	Lanczos tau approximation~\eqref{eq:tauop}, if exponentially stable,
	is given by
	\[
		\norm{G_N}_\Htwo = \sqrt{\tr(C \opE_0 U \opE_0^T C^T)},
	\]
	where the bivariate polynomial
	$U : [-\tau, 0] \times [-\tau, 0] \to \CC^{n\times n}$, of degree
	$(N, N)$, is the unique solution of the system
	\begin{equation}\label{eq:discdellyap}
		\begin{cases}
			\opD U \opE_0^T +  \opT_{N-1} U \pqty{\opE_0^TA_0^T + \opE_{-\tau}^T A_1^T} = \vb{0}, \\
			\opD U \opT^T_{N-1} + \opT_{N-1} U \opD^T = \vb{0},                                   \\
			\opE_\theta U \opE^T_{\theta'} = \pqty{\opE_{\theta'} U \opE^T_{\theta}}^T,           \\
			\pqty{A_0\opE_0 + A_1\opE_{-\tau}} U \opE^T_0 + \opE_0 U \pqty{\opE_0^TA_0^T + \opE_{-\tau}^TA_1^T} = -BB^T.
		\end{cases}
	\end{equation}
\end{proposition}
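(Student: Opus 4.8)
The plan is to reduce the claim to the matrix Lyapunov formulation already recalled just before the statement, and then translate that matrix equation block-by-block into the operator notation. Concretely, I would start from the known identity $\norm{G_N}_\Htwo = \sqrt{\tr(\mathcal{C}_N V \mathcal{C}_N^T)}$, with $V$ the solution of the generalized symmetric Lyapunov equation~\eqref{eq:symlyap}, valid precisely because the approximation is assumed exponentially stable. Since $\mathcal{C}_N = C[\opE_0]$, the norm expression immediately rewrites as $\sqrt{\tr(C[\opE_0]V[\opE_0]^TC^T)} = \sqrt{\tr(C \opE_0 U \opE_0^T C^T)}$ through the defining relation $\opE_\theta U \opE_{\theta'}^T = [\opE_\theta]V[\opE_{\theta'}]^T$, which already matches the asserted formula. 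It thus remains to show that $V$ solving~\eqref{eq:symlyap} is equivalent, under this bijection, to $U$ solving the system~\eqref{eq:discdellyap}.

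First I would substitute the block forms of $\mathcal{A}_N$, $\mathcal{E}_N$, and $\mathcal{B}_N$ from~\eqref{eq:taumatrix} into the left-hand side of~\eqref{eq:symlyap} and expand the resulting $2\times 2$ block matrix. Reading off the four blocks and applying the bijection $[\opE_\theta]V[\opE_{\theta'}]^T \mapsto \opE_\theta U \opE_{\theta'}^T$ (and likewise with $\opT_{N-1}$ and $\opD$ in the appropriate slots), the $(2,1)$ block yields the first equation of~\eqref{eq:discdellyap}, the $(2,2)$ block the second, and the $(1,1)$ block the fourth; the right-hand side $-\mathcal{B}_N\mathcal{B}_N^T$ contributes only the $-BB^T$ term in the $(1,1)$ block and zero elsewhere. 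This accounts for three of the four listed relations.

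The remaining ingredient is the third equation, the symmetry condition. Here I would use that, under exponential stability, the Lyapunov equation~\eqref{eq:symlyap} has a unique solution and that, because transposing the whole equation again produces a solution, this solution is symmetric, $V = V^T$; in operator terms this is exactly $\opE_\theta U \opE_{\theta'}^T = (\opE_{\theta'} U \opE_\theta^T)^T$. Given $V = V^T$, the off-diagonal $(1,2)$ block of the expanded equation is merely the transpose of the $(2,1)$ block and so carries no new information, which is why the naively four block relations collapse to the three inhomogeneous equations of~\eqref{eq:discdellyap} together with the symmetry constraint. Uniqueness of $U$ then transfers directly from uniqueness of the symmetric $V$.

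The step I expect to require the most care is precisely this bookkeeping of redundancy: confirming that the symmetry condition genuinely substitutes for the suppressed $(1,2)$ block rather than over- or under-determining the system, and checking that the transpose conventions of the operator notation (left multiplication acting on the first variable, transposed right multiplication on the second) make the $(2,1)$ and $(1,2)$ blocks exact transposes of one another. Once these conventions are pinned down, the equivalence is a direct block-wise comparison, and uniqueness is inherited from the standard theory of the generalized Lyapunov equation with an invertible, stable pencil.
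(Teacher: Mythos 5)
Your proposal is correct and follows essentially the same route as the paper's proof: both start from the generalized Lyapunov equation~\eqref{eq:symlyap} with $\norm{G_N}_\Htwo^2 = \tr(\mathcal{C}_N V \mathcal{C}_N^T)$, substitute the block forms of $\mathcal{E}_N$, $\mathcal{A}_N$, $\mathcal{B}_N$, and translate block-by-block into operator notation via the matrix--polynomial bijection, with $V = V^T$ supplying the symmetry condition and rendering the $(1,2)$ block redundant. Your more explicit bookkeeping of which block yields which equation, and of why the symmetry constraint exactly replaces the suppressed off-diagonal block, is a faithful elaboration of what the paper compresses into ``splitting these expressions into their subblocks and noting that $V = V^T$ implies $\opE_\theta U \opE^T_{\theta'} = \pqty{\opE_{\theta'} U \opE^T_{\theta}}^T$.''
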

\begin{proof}
	Note that we can write~\eqref{eq:symlyap}, without loss of
	generality, as
	\[
		\spmqty{A_0 [\opE_0] + A_1 [\opE_{-\tau}] \\ [\opD]} V \spmqty{[\opE_0] \\ [\opT_{N-1}]}^T +  \spmqty{[\opE_0] \\ [\opT_{N-1}]} V \spmqty{A_0 [\opE_0] + A_1 [\opE_{-\tau}] \\ [\opD]}^T = -\spmqty{B \\ \vb{0}}\spmqty{B \\ \vb{0}}^T.
	\]
	Similarly, we have
	$\norm{G_N}^2_\Htwo = \tr(C [\opE_0] V [\opE_0]^T C^T)$. The
	assertions can now be recovered using the earlier identification
	between matrices and bivariate polynomials, by splitting these
	expressions into their sub\-blocks and noting that $V = V^T$
	implies
	$\opE_\theta U \opE^T_{\theta'} = \pqty{\opE_{\theta'} U
		\opE^T_{\theta}}^T$.  Finally, uniqueness and existence are
	direct consequences of the corresponding properties
	of~\eqref{eq:symlyap}.
\end{proof}

We can use this result to connect the approach adopted here with the method of
\citet{JarlebringCaC2011}.  The latter is based on the following
characterization of the $\Htwo$-norm, again assuming the system~\eqref{eq:rdde}
is exponentially stable:
\[
	\norm{G}_\Htwo = \sqrt{\tr(C\lambda(0)C^T)},
\]
where $\lambda \in C^1([-\tau, \tau]; \CC^{n\times n})$ is the
solution of the so-called delay Lyapunov equation
\begin{equation}\label{eq:delaylyap}
	\begin{cases}
		\lambda'(t) + \lambda(t)A_0^T + \lambda(t + \tau)A_1^T = \vb{0}, \quad t \in [-\tau, 0), \\
		\lambda(-t) = {\lambda(t)}^T,                                                            \\
		A_0\lambda(0) + A_1\lambda(-\tau) + \lambda(0)A_0^T + \lambda(\tau)A_1^T = -BB^T.
	\end{cases}
\end{equation}
At least intuitively, we now have a link between both methods via
$U(\theta, \theta') \approx \lambda(\theta - \theta')$.  Indeed,
assuming $\opT_{N-1}$ to go to the identity operator as
$N \to \infty$, the top and bottom equations of~\eqref{eq:discdellyap}
and~\eqref{eq:delaylyap} match.  We recover
$\lambda(-t) = {\lambda(t)}^T$ from
$\opE_\theta U \opE_{\theta'}^T = \pqty{\opE_{\theta'} U
		\opE_\theta^T}^T$ by setting $\theta = t$ and $\theta' = 0$ for
$t \le 0$ and $\theta = 0$ and $\theta' = -t$ for $t > 0$.  Finally, the
remaining equation guarantees that, in the limit, $U(\theta, \theta')$
is a function of $\theta - \theta'$. We can thus interpret the bivariate
polynomial associated with the solution of~\eqref{eq:symlyap} as an
approximation of $(\theta, \theta') \mapsto \lambda(\theta - \theta')$
with respect to $\Phi_N \otimes \Phi_N$.

Finally, as an additional property, these equations show an
interesting symmetry in the scalar case.
\begin{lemma}\label{lem:reversal}
	For a Lanczos tau discretization satisfying \cref{assum:oddeven} of
	a scalar system~\eqref{eq:rdde}, i.e.\ a system with $n=1$, it holds
	that the solution of~\eqref{eq:discdellyap}
	\[
		U = \opR U \opR^T,
	\]
	with $\opR$ the reversal operator, such that
	$\opE_\theta \opR = \opE_{-\theta-\tau}$.
\end{lemma}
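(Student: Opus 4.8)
The plan is to argue by uniqueness. By \cref{prop:delaylyap} the system~\eqref{eq:discdellyap} has exactly one solution, so it suffices to show that $W := \opR U \opR^T$ also solves~\eqref{eq:discdellyap}; the claim $U = \opR U\opR^T$ then follows. All the manipulations rest on a few reversal identities that are immediate from \cref{assum:oddeven}. Since $\phi_k(-\tau-\theta) = {(-1)}^k\phi_k(\theta)$ gives $\opR\phi_k = {(-1)}^k\phi_k$, the reversal is an involution, $\opR^2 = \opI$; it exchanges the two evaluation functionals, $\opE_0\opR = \opE_{-\tau}$ and $\opE_{-\tau}\opR = \opE_0$; it commutes with truncation, $\opT_{N-1}\opR = \opR\opT_{N-1}$ (both scale $\phi_k$ by ${(-1)}^k$ and annihilate $\phi_N$); and it anticommutes with differentiation, $\opD\opR = -\opR\opD$ (reversing the argument flips the sign of the derivative). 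On the second variable I will likewise use $\opR^T\opE_0^T = \opE_{-\tau}^T$ and $\opR^T\opE_{-\tau}^T = \opE_0^T$.

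First I would dispatch the two equations of~\eqref{eq:discdellyap} that do not involve the system matrices. For the homogeneous (second) equation, substituting $W$ and using $\opD\opR=-\opR\opD$ together with $\opT_{N-1}\opR=\opR\opT_{N-1}$ lets one pull a common factor $\opR(\,\cdot\,)\opR^T$ outside, giving $\opD W\opT_{N-1}^T + \opT_{N-1}W\opD^T = -\opR\pqty{\opD U\opT_{N-1}^T + \opT_{N-1}U\opD^T}\opR^T = \vb 0$. For the symmetry (third) equation one evaluates $\opE_\theta W\opE_{\theta'}^T = U(-\theta-\tau,-\theta'-\tau)$ and reads off that, in the scalar case, the symmetry relation for $W$ is just the symmetry relation for $U$ at the reversed arguments. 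Both verifications are routine.

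The substance of the lemma lies in the two remaining, boundary equations, which are \emph{not} individually reversal invariant: reversal sends $\opE_0$ to $\opE_{-\tau}$ and $\opD$ to $-\opD$. Writing $F := U\opE_0^T$ and $G := U\opE_{-\tau}^T$ for the boundary slices $\theta\mapsto U(\theta,0)$ and $\theta\mapsto U(\theta,-\tau)$, the first equation of~\eqref{eq:discdellyap} reads, for $n=1$, $\opD F + \opT_{N-1}(A_0 F + A_1 G) = \vb 0$, whereas substituting $W$ and extracting $\opR$ shows that $W$ satisfies the first equation precisely when the \emph{reversed} relation $-\opD G + \opT_{N-1}(A_0 G + A_1 F) = \vb 0$ holds for $U$. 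Applying $\opR$ to the first equation and using $\opR\opD=-\opD\opR$, $\opR\opT_{N-1}=\opT_{N-1}\opR$ produces $-\opD(\opR F) + \opT_{N-1}(A_0\opR F + A_1\opR G) = \vb 0$; matching this with the reversed relation shows the latter holds if and only if $\opR F = G$ (equivalently $\opR G = F$, as $\opR^2=\opI$), that is $U(-\theta-\tau,0)=U(\theta,-\tau)$. The same slice identity, evaluated at the end points, is exactly what makes the fourth (inhomogeneous) equation reversal covariant: it yields $U(-\tau,-\tau)=U(0,0)$, while the remaining term $U(0,-\tau)=U(-\tau,0)$ is already fixed by the symmetry of $U$.

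The hard part will therefore be establishing the slice identity $\opR F = G$, equivalently, in the coordinates of~\eqref{eq:taumatrix}, the checkerboard structure $V_{jk}=0$ whenever $j+k$ is odd. This cannot be read off any single equation of~\eqref{eq:discdellyap}: it must be distilled from the homogeneous (second) equation together with the first, whose parity components couple the odd-index blocks of $V$ only among themselves, and then closed using the transposition symmetry $U=U^T$. It is here that the restriction $n=1$ is essential, since it collapses the matrix transpose in the symmetry relation to a plain reflection, so that the reversal-induced exchange of the roles of $A_0$ and $A_1$ can be reconciled with the original equation; for $n>1$ the corresponding statement carries a transpose, reflecting the pointwise identity $\lambda(-t)=\lambda(t)^T$ in~\eqref{eq:delaylyap} rather than an honest reversal symmetry. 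Once $\opR F = G$ is in hand the two boundary equations follow, $W$ solves~\eqref{eq:discdellyap}, and uniqueness yields $U = \opR U\opR^T$.
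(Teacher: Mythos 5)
Your reduction is carried out correctly as far as it goes: the reversal identities are right, the second and third equations of~\eqref{eq:discdellyap} are indeed invariant under $U \mapsto \opR U \opR^T$, and you correctly isolate that the first and fourth equations hold for $\opR U \opR^T$ provided the slice identity $\opR U \opE_0^T = U \opE_{-\tau}^T$ holds. The problem is that you never prove this slice identity; you only assert that it ``must be distilled'' from the second equation together with the first and the transposition symmetry. That identity is essentially the entire content of the lemma, and everything you do establish is routine bookkeeping around it, so the proposal has a genuine gap precisely at the one nontrivial step. There is also a conflation in your claimed equivalence: under \cref{assum:oddeven}, the checkerboard structure $V_{jk} = 0$ for $j+k$ odd in the basis $\Phi_N$ is equivalent to the \emph{full} conclusion $U = \opR U \opR^T$, not merely to the slice identity, so if you could establish it your uniqueness scaffolding would be superfluous anyway.

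For comparison, the paper's proof needs neither the first nor the fourth equation, nor uniqueness. It expands $U$ in the derivative basis $\Bqty{\phi_N^{(N-j)}}_{j=0}^N$, writing $U = \sum_{j,k} W_{jk}\,\Psi_{jk}$ with $\Psi_{jk}(\theta,\theta') = \phi_N^{(N-j)}(\theta)\,\phi_N^{(N-k)}(\theta')$. In that basis $\opD$ acts as a pure shift, $\opD \phi_N^{(N-j)} = \phi_N^{(N-j+1)}$, and $\opT_{N-1}$ annihilates only $\phi_N$ itself, so the second equation collapses to the recurrence $W_{j+1,k} = -W_{j,k+1}$: anti-diagonals of $W$ have constant magnitude and alternating sign. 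The third equation gives $W_{jk} = W_{kj}$, which forces every anti-diagonal of even length (i.e.\ $j+k$ odd) to vanish. \Cref{assum:oddeven} then makes each surviving $\Psi_{jk}$, with $j+k$ even, a product of two symmetric or two antisymmetric polynomials, hence $\Psi_{jk} = \opR \Psi_{jk} \opR^T$, and $U = \opR U \opR^T$ follows by linearity. This change to the derivative basis, under which the troublesome operators $\opD$ and $\opT_{N-1}$ become a shift and a corner truncation, is the missing idea; your plan of extracting parity information in the coordinates of~\eqref{eq:taumatrix}, where $[\opD]$ is a full matrix, offers no comparably tractable structure, and your insistence that the first equation must enter points in the wrong direction.
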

\begin{proof}
	Decomposing $U$ in the derivative basis of $\phi_N$, that is,
	writing
	\[ \textstyle
		U = \sum_{j=0}^N \sum_{k=0}^N W_{jk} \, \Psi_{jk},
	\]
	with
	$\Psi_{jk}(\theta, \theta') = \phi_N^{(N-j)}(\theta)\,
		\phi_N^{(N-k)}(\theta')$, we note that
	$\opD U \opT^T_{N-1} = -\opT_{N-1} U \opD^T$ implies
	\[ \textstyle
		\sum_{j=1}^N \sum_{k=0}^{N-1} W_{jk} \, \Psi_{{j-1},k} =
		-\sum_{j=0}^{N-1} \sum_{k=1}^N W_{jk} \, \Psi_{j,{k-1}},
	\]
	and thus $W_{j+1,k} = -W_{j,k+1}$. Hence, the coefficients on an
	anti-diagonal of $W$ have constant magnitude and alternating
	sign. As we additionally have $W_{jk} = W_{kj}$ from
	$\opE_\theta U \opE^T_{\theta'} = \pqty{\opE_{\theta'} U
		\opE^T_{\theta}}^T$, the anti-diagonals of even length must be
	zero, thus $W_{jk} = 0$ if $j+k$ is odd.  Furthermore,
	\cref{assum:oddeven} implies that $\phi^{(N-k)}_N$ is symmetric
	when $k$ is even and antisymmetric when $k$ is odd. Hence, for
	$i + j$ even, the bivariate polynomial $\Psi_{jk}$ is the product
	of either two symmetric or two antisymmetric polynomials and thus
	$\Psi_{jk} = \opR\Psi_{jk}\opR^T$. The assertion then follows from
	the linearity of $\opR$ and the structure of $W$.
\end{proof}
Note that from the point of view of the algebraic Lyapunov
equation~\eqref{eq:symlyap}, the same structure on $W$ follows from
the companion matrix structure of the equation, when expressed in the
above basis, as shown by \citet{BarnettTLm1967}.

\subsection{General $A_0$ and $A_1$}
If we approximate the $\Htwo$-norm using different state space
discretizations, namely pseudospectral collocation in Chebyshev
extremal nodes and a Lanczos tau method satisfying
\cref{assum:oddeven}, we consistently see results similar to
\cref{fig:usual-conv}. We get super-geometric convergence for the
Lanczos tau method instead of the third order convergence of the other
discretization as described in \citet{VanbiervlietUsd2011}.

\begin{figure}
	\centering
	\includegraphics[width=.7\textwidth]{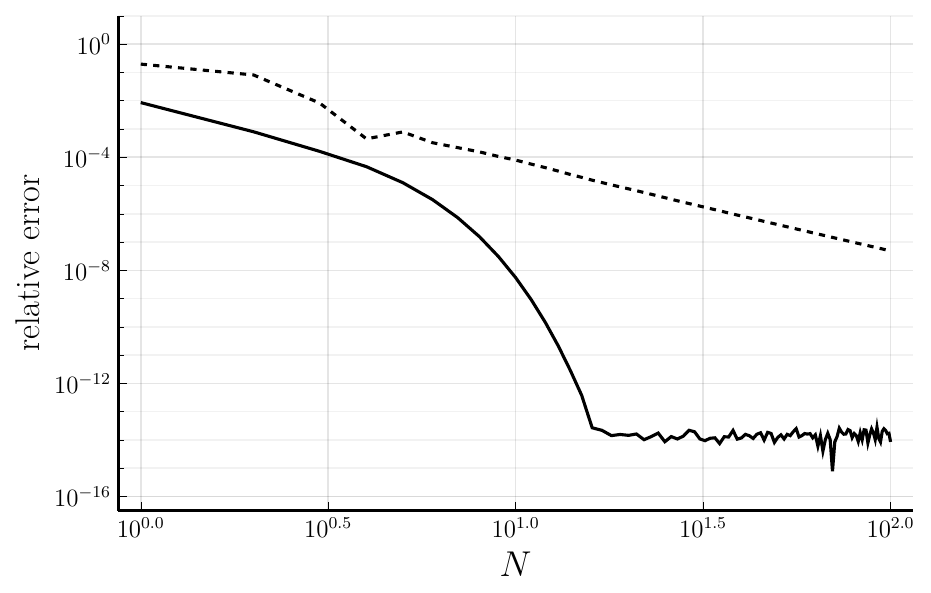}
	\caption{\label{fig:usual-conv} Convergence of the $\Htwo$-norm
		of~\eqref{eq:rdde} with $A_0 = \spmqty{-2 & 1 \\ 3 & -8}$,
		$A_1 = \spmqty{-1 & -1 \\ -1 & -1}$, $\tau = 1$, and
		$B = C = I_2$, for different discretizations. The solid line
		corresponds to the Lanczos tau method with Chebyshev polynomials
		of the second kind, the dashed line to pseudospectral
		collocation in Chebyshev extremal nodes.}
\end{figure}

\begin{figure}
	\centering
	\includegraphics[width=.7\textwidth]{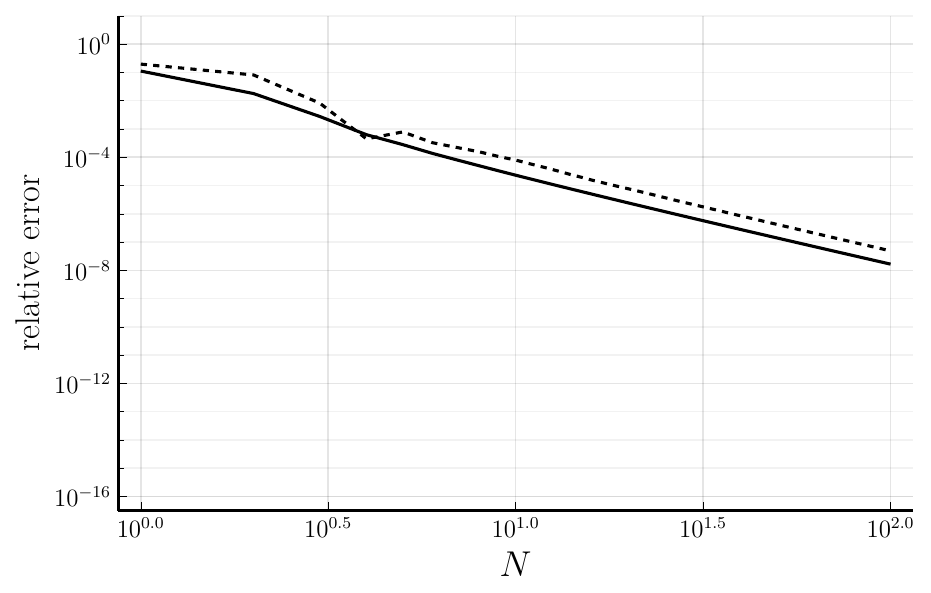}
	\caption{\label{fig:jacobi-conv} Convergence of the $\Htwo$-norm
		of the system of \cref{fig:usual-conv}, for different
		discretizations. The solid line corresponds to the Lanczos tau
		method with Jacobi $\pqty{-\frac{1}{2},-\frac{3}{4}}$
		polynomials, which do not satisfy \cref{assum:oddeven}, the
		dashed line to pseudospectral collocation in Chebyshev extremal
		nodes.}
\end{figure}

Note that the $\Htwo$-norm depends on the behaviour of the transfer
function along the entire imaginary line; however, the rational
approximation of \cref{prop:taurat} is only accurate around zero.
From this perspective, algebraic convergence is the most that one
would expect from such a method \citep{VanbiervlietUsd2011}.  The
observed improvement in convergence is thus highly surprising,
especially as we did not change the computational cost.  Part of the
explanation might stem from the link to the delay Lyapunov equation.
As its solution is an analytic function defined on a finite domain, we
could get super-geometric convergence for a Chebyshev basis if we in
fact implicitly solve this equation using a spectral
method \citep{TadmorTEA1986}.  However, similar convergence would then
be expected of other reasonably behaved bases, such as the Jacobi
polynomials $P_k^{\pqty{-\frac{1}{2},-\frac{3}{4}}}$ which nonetheless
degrade back to third order convergence (see
\cref{fig:jacobi-conv}). As we only note this deterioration when
\cref{assum:oddeven} is not satisfied, we presume that such a symmetry
condition also plays a major part in explaining the observed
super-geometric convergence.  A hint is given by the following result,
which shows that the rational approximation qualitatively matches the
exponential better on the imaginary axis, in the sense that its
magnitude equals one, precisely when this property is satisfied.
\begin{proposition}\label{prop:ratmag}
	If $\Phi_N$ is chosen such that \cref{assum:oddeven} is satisfied,
	we have
	\[
		\abs{r_N(i\omega, -\tau)} = 1, \quad \forall \omega \in \RR,
	\]
	where $r_N$ is as in \cref{prop:taurat}.
\end{proposition}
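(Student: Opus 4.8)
The plan is to start from the explicit rational expression for $r_N(s, -\tau)$ furnished by \cref{prop:ratexp}, namely
\[
	r_N(s, -\tau) = \frac{\sum_{k=0}^N \phi_N^{(N-k)}(-\tau)\, s^k}{\sum_{k=0}^N \phi_N^{(N-k)}(0)\, s^k},
\]
and to show that, under \cref{assum:oddeven}, the numerator is obtained from the denominator by the single substitution $s \mapsto -s$. Writing $q(s) = \sum_{k=0}^N \phi_N^{(N-k)}(0)\, s^k$ for the denominator polynomial, the goal reduces to establishing the identity $r_N(s, -\tau) = q(-s)/q(s)$.

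First I would translate the symmetry hypothesis on $\phi_N$ into a parity relation for its derivatives. Differentiating the relation $\phi_N(-\tau - \theta) = (-1)^N \phi_N(\theta)$ exactly $m$ times in $\theta$, the chain rule contributes a factor $(-1)^m$ on the left, so that $\phi_N^{(m)}(-\tau - \theta) = (-1)^{N-m} \phi_N^{(m)}(\theta)$. Evaluating at $\theta = 0$ and setting $m = N - k$ then yields the key identity $\phi_N^{(N-k)}(-\tau) = (-1)^k \phi_N^{(N-k)}(0)$. Consequently each numerator coefficient equals $(-1)^k$ times the corresponding denominator coefficient, so the numerator is precisely $\sum_{k=0}^N \phi_N^{(N-k)}(0)\,(-s)^k = q(-s)$, as desired.

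To finish, I would restrict to $s = i\omega$ with $\omega \in \RR$. Since the shifted orthogonal polynomials $\phi_k$ are real-valued, the coefficients $\phi_N^{(N-k)}(0)$ are real, whence $q$ has real coefficients and $q(-i\omega) = \overline{q(i\omega)}$. Therefore $r_N(i\omega, -\tau) = \overline{q(i\omega)}/q(i\omega)$, which has modulus $1$ wherever it is defined.

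The main obstacle is really only the bookkeeping in the derivative parity relation, in particular keeping track of the two independent sources of sign, the factor $(-1)^m$ from the chain rule and the factor $(-1)^N$ from the assumed symmetry, and confirming that they combine to $(-1)^k$ after the substitution $m = N - k$. One should also note, for the statement to be literally meaningful, that $q(i\omega) \neq 0$; this is guaranteed by the exponential stability assumed throughout this section, under which the poles of $s \mapsto r_N(s, -\tau)$ lie strictly off the imaginary axis.
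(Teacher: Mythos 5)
Your proof is correct and takes essentially the same route as the paper: both start from the explicit expression in \cref{prop:ratexp}, reduce the claim to the parity identity $\phi_N^{(N-k)}(-\tau) = (-1)^k \phi_N^{(N-k)}(0)$ implied by \cref{assum:oddeven}, and conclude that the numerator and denominator are complex conjugates along the imaginary axis (you merely spell out the chain-rule bookkeeping that the paper leaves implicit). One small quibble: your closing appeal to exponential stability is not quite right, since poles of $s \mapsto r_N(s,-\tau)$ need not be poles of $G_N$ and the proposition assumes no stability, but this aside is inessential—your own identity $p(s) = q(-s)$ shows any imaginary-axis zero of the denominator is shared by the numerator, so $\abs{r_N(i\omega,-\tau)} = 1$ wherever the function is defined.
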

\begin{proof}
	From \cref{prop:ratexp} we have
	\[
		r_N(i\omega, -\tau) = \frac{\sum_{k=0}^N i^k \phi_N^{(N-k)}(-\tau) \, \omega^k}{\sum_{k=0}^N i^k \phi_N^{(N-k)}(0) \, \omega^k}.
	\]
	\Cref{assum:oddeven} implies
	$\phi_N^{(N-k)}(-\tau) = (-1)^k \phi_N^{(N-k)}(0)$, hence the
	denominator is the complex adjoint of the numerator.
\end{proof}

\subsection{A case with super convergence}
We consider the system~\eqref{eq:rdde} in the scalar case, with
$A_0 = A_1 = a < 0$.  The zero solution of this system is then known to be
exponentially stable for any $\tau \ge 0$, as proved by
\citet[Theorem~1]{HayesRot1950}.  Impressively, we see that the discretizations
based on the Lanczos tau method, again under \cref{assum:oddeven}, already give
the exact result at $N=1$ (see \cref{fig:super-conv}; the slight increase in
error as $N$ increases is due to rounding errors in the underlying matrix
operations). This is once again rather surprising from the perspective of an
integral over an unbounded domain, as the transfer functions barely match, even
near zero (see \cref{fig:tf-approx}). We conclude by proving this effect.

\begin{figure}
	\centering
	\includegraphics[width=.7\textwidth]{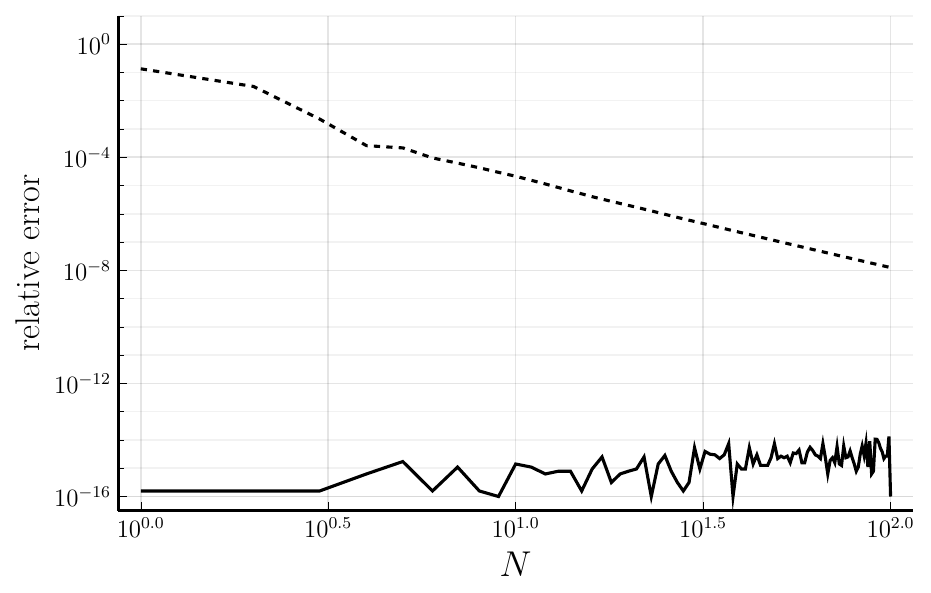}
	\caption{\label{fig:super-conv} Convergence of the $\Htwo$-norm
		of~\eqref{eq:rdde} with $A_0 = A_1 = -1$, $\tau = 1$, and
		$B = C = 1$, for different discretizations. The solid line
		corresponds to the Lanczos tau method with Chebyshev polynomials
		of the second kind, the dashed line to pseudospectral
		collocation in Chebyshev extremal nodes. For clarity of the
		figure, the relative error was lower bounded by $10^{-16}$.}
\end{figure}

\begin{figure}
	\centering
	\includegraphics[width=.7\textwidth]{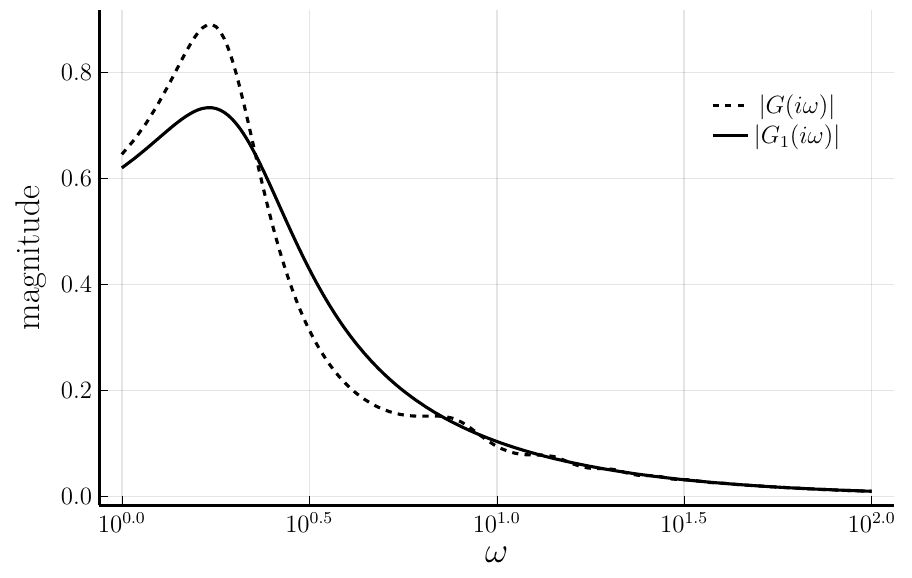}
	\caption{\label{fig:tf-approx} Transfer function $G$ of the system
		of \cref{fig:super-conv}, and the approximation $G_1$ from a
		Lanczos tau method using Chebyshev polynomials of the second
		kind with $N=1$.}
\end{figure}

For the class of systems showing this super convergence, we get as
transfer function
\begin{equation}\label{eq:degen-tf}
	G(s) = \pqty{s - a \pqty{1 + e^{-\tau s}}}^{-1}.
\end{equation}
We can compute its $\Htwo$-norm analytically by solving the delay Lyapunov
equation. As before, we have $\norm{G}_\Htwo = \sqrt{\lambda(0)}$, where
\[
	\begin{cases}
		\lambda'(t) + a\pqty{\lambda(t) + \lambda(t + \tau)} = 0, \quad t \in [-\tau, 0), \\
		\lambda(-t) = \lambda(t),                                                         \\
		2a\pqty{\lambda(0) + \lambda(\tau)} = -1.
	\end{cases}
\]
It is easy to verify that $\lambda(t) = \frac{a\tau - 2a\abs{t} - 1}{4a}$
is the solution, yielding
\[
	\norm{G}_\Htwo = \sqrt{\frac{a\tau - 1}{4a}}.
\]

We can now show the following.
\begin{proposition}\label{prop:superconv}
	For the system with transfer function~\eqref{eq:degen-tf}, we have
	\[
		\norm{G_N}_\Htwo = \norm{G}_\Htwo
	\]
	for $N \ge 1$, when discretized using a Lanczos tau method
	satisfying \cref{assum:oddeven}.
\end{proposition}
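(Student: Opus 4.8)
The plan is to reduce the statement to a single identity about the value at the origin of the bivariate polynomial $U$ produced by \cref{prop:delaylyap}. In the scalar case $C=1$ that proposition gives $\norm{G_N}_\Htwo^2 = \opE_0 U \opE_0^T = U(0,0)$, and since the analytic computation preceding the statement yields $\norm{G}_\Htwo^2 = \lambda(0) = \tfrac{a\tau-1}{4a}$, it suffices to prove $U(0,0) = \tfrac{a\tau-1}{4a}$ for every $N\ge 1$ and every basis obeying \cref{assum:oddeven}. First I would specialise the four relations in~\eqref{eq:discdellyap} to $n=1$, $A_0=A_1=a$, $B=1$, and introduce the univariate polynomial $g(\theta):=\opE_\theta U \opE_0^T = U(\theta,0)\in\PP_N$, so that the quantity to be pinned down is $g(0)$.

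The decisive structural input is \cref{lem:reversal}, which under \cref{assum:oddeven} gives $U=\opR U\opR^T$, that is $U(\theta,\theta')=U(-\theta-\tau,-\theta'-\tau)$. Setting $\theta'=-\tau$ turns this into $U(\theta,-\tau)=g(-\theta-\tau)=(\opR g)(\theta)$, so the first equation of~\eqref{eq:discdellyap} closes entirely in terms of $g$:
\[
	g' + a\,\opT_{N-1}(\opI+\opR)g = \vb{0}.
\]
The symmetry relation (third equation) gives $U(0,-\tau)=U(-\tau,0)=g(-\tau)$, so the trace equation (fourth equation) collapses to $g(0)+g(-\tau)=-\tfrac{1}{2a}$. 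My claim is that these two relations already determine $g(0)$.

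The heart of the argument I envisage is a parity splitting with respect to reflection about $-\tfrac{\tau}{2}$. Writing $g=g_s+g_a$ with $g_s$ symmetric and $g_a$ antisymmetric in the sense of \cref{assum:oddeven}, differentiation interchanges the two parities, whereas $\opT_{N-1}$ preserves each of them; the latter is exactly where the symmetric weight behind \cref{assum:oddeven} enters, since the deleted $\phi_N$-component shares the parity of $\phi_N$. As $(\opI+\opR)g=2g_s$, matching the symmetric and antisymmetric parts of the displayed ODE forces $g_s'=\vb{0}$, hence $g_s\equiv c$ constant, together with $g_a'=-2a\,\opT_{N-1}g_s=-2ac$, using $\opT_{N-1}1=1$ for $N\ge 1$. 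The trace relation then gives $2c=-\tfrac{1}{2a}$, so $c=-\tfrac{1}{4a}$, while integrating $g_a'=\tfrac12$ subject to the antisymmetry condition $g_a(-\tfrac{\tau}{2})=0$ gives $g_a(\theta)=\tfrac{\theta}{2}+\tfrac{\tau}{4}$, whence $g_a(0)=\tfrac{\tau}{4}$. Combining, $g(0)=c+g_a(0)=\tfrac{a\tau-1}{4a}=\lambda(0)$, which is the assertion.

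The step I expect to be the main obstacle is the parity bookkeeping: one must check carefully that $\opT_{N-1}$ genuinely leaves both the symmetric and the antisymmetric subspace invariant (treating the cases $N$ even and $N$ odd), and that the reversal identity of \cref{lem:reversal} is legitimately available, which in turn requires that~\eqref{eq:discdellyap} has a unique solution and that the discretisation~\eqref{eq:tauop} is exponentially stable, so that the Lyapunov characterisation of \cref{prop:delaylyap} applies for $a<0$. Once the parities are tracked correctly, the remaining work is the routine constant-fitting sketched above, and notably the degree $N$ never enters beyond the requirement $N\ge 1$, which is precisely the source of the super convergence.
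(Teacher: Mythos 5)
Your proposal is correct, and its skeleton coincides with the paper's own proof: both invoke \cref{prop:delaylyap} and \cref{lem:reversal} to close the first and fourth equations of~\eqref{eq:discdellyap} in a single univariate polynomial $g = U\opE_0^T$ (the paper's $\mu$), arriving at exactly the paper's equations~\eqref{eq:scprf1} and~\eqref{eq:scprf2}. The only divergence is the endgame. The paper observes that this pair of equations forms a full-rank system, hence determines a unique polynomial, and then verifies that $\mu(\theta) = \lambda(-\theta) = \frac{a\tau + 2a\theta - 1}{4a}$ solves it, the key point being that $\mu + \opR\mu$ is constant. You instead solve the system constructively, splitting $g = g_s + g_a$ into symmetric and antisymmetric parts and using that $\opD$ swaps the two parities while $\opT_{N-1}$ preserves them; this forces $g_s \equiv -\frac{1}{4a}$ and $g_a(\theta) = \frac{\theta}{2} + \frac{\tau}{4}$, recovering the same polynomial and yielding uniqueness as a by-product, so the full-rank claim is never needed. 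The parity bookkeeping you flag as the main risk does go through, and in fact needs no appeal to a symmetric weight: by \cref{assum:oddeven} and uniqueness of the expansion in $\Phi_N$, a symmetric (resp.\ antisymmetric) polynomial has non-zero coefficients only on even-index (resp.\ odd-index) basis functions, so the $\phi_N$-component removed by $\opT_{N-1}$ always shares the parity of its input, for $N$ even and odd alike, which is exactly the property you use. Finally, both arguments rest equally on exponential stability of the discretized system, inherited from the hypothesis of \cref{prop:delaylyap} and left unverified in the paper as well, so you are not assuming anything the paper does not.
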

\begin{proof}
	From \cref{prop:delaylyap} we have
	$\norm{G_N}_\Htwo = \sqrt{\opE_0 U \opE_0^T}$ with $U$ the solution of
	\[
		\begin{cases}
			\opD U \opE_0^T +  a \opT_{N-1} U \pqty{\opE_0^T + \opE_{-\tau}^T} = \vb{0}, \\
			\opD U \opT^T_{N-1} + \opT_{N-1} U \opD^T = \vb{0},                          \\
			\opE_\theta U \opE^T_{\theta'} = \pqty{\opE_{\theta'} U \opE^T_\theta}^T,    \\
			a\pqty{\opE_0 + \opE_{-\tau}} U \opE^T_0 + a \opE_0 U \pqty{\opE_0^T + \opE_{-\tau}^T} = -1.
		\end{cases}
	\]

	Let $\mu$ be the univariate polynomial $U \opE_0^T$. From
	\cref{lem:reversal}, we then have
	$U\opE_{-\tau}^T = \opR U\opE_0^T = \opR\mu$. For the top equation
	this gives
	\begin{equation}\label{eq:scprf1}
		\opD \mu = -a\opT_{N - 1} \pqty{\mu + \opR\mu}.
	\end{equation}
	The bottom equation can be rewritten as
	$\pqty{\opE_0 + \opE_0\opR} \mu + \opE_0 \pqty{\mu + \opR\mu} =
		-\frac{1}{a}$ or
	\begin{equation}\label{eq:scprf2}
		\opE_0 \pqty{\mu + \opR\mu} = -\frac{1}{2a}.
	\end{equation}

	By expressing these equations in a basis, it is easily seen
	that~\eqref{eq:scprf1} and~\eqref{eq:scprf2} form a system of full
	rank, hence this system uniquely determines a polynomial. It is
	straightforwardly verified that
	$\mu(\theta) = \lambda(-\theta) = \frac{a\tau + 2a\theta - 1}{4a}$
	solves this system for a degree-graded, orthogonal basis, if
	$N \ge 1$, as $\mu + \opR\mu$ is a constant function. We thus get
	the exact result under the given conditions.
\end{proof}

\section{Conclusions}
We developed a framework of operator pencil formulations of the
Lanczos tau method~\eqref{eq:tauop} for the discretization of linear
systems with state delay.  The interpretation in terms of actions on
polynomials aids theoretical derivations.  We showed equivalence to
rational approximation in the frequency domain and provided an
explicit expression of the resulting rational function in
\cref{prop:taurat,prop:ratexp}, respectively.

Links were also made to pseudospectral collocation in \cref{thm:eq}.
We illustrated how Lanczos tau methods naturally lead to nested and
sparse matrix realizations, which can be exploited in Krylov methods,
allowing improved performance (\cref{sec:selfnest}).  Particularly
surprising was equivalence to Padé approximation for the choice of a
shifted Legendre basis (\cref{thm:pade}); our proof of which strongly
relied on the interpretation in terms of operations on polynomials.

Finally, we illustrated the potential benefits of the Lanczos tau
framework in \cref{sec:H2ex}, where, under a mild symmetry condition
(\cref{assum:oddeven}), significantly increased convergence rates,
compared to earlier work, were observed and partially proved for the
$\Htwo$-norm.  From links to the delay Lyapunov equation
(\cref{prop:delaylyap}), through bivariate polynomials, and from
qualitative properties of the rational approximation
(\cref{prop:ratmag}), we could provide intuitions for observed
super-geometric convergence.  Proving this effect, however, remains an
open problem.  A proof of a case of super convergence concluded our
work (\cref{prop:superconv}).  Note that this super-geometric
convergence and, in particular, super convergence are at first glance
somewhat unexpected, as the $\Htwo$-norm is inherently a global
characteristic; it depends on the behaviour of the transfer function
along the entire imaginary axis.

Although we did not pursue multiple nor distributed delays in this paper, the
development of \cref{sec:tau}, in particular \cref{prop:taurat,prop:ratexp}, the
equivalence \cref{thm:eq}, and the benefits discussed in \cref{sec:selfnest},
can easily be extended to these cases. In fact, any bounded linear functional
can be substituted for $A_0 \opE_0 + A_1 \opE_{-\tau}$ in~\eqref{eq:tauop}. As
already noted at the end of \cref{sec:pade}, a link to Padé approximation cannot
be obtained from such an extension. \Cref{sec:H2ex} also depends on having only
a single delay; how to extend the noted benefits to more general settings is
thus an interesting open question.

\appendix
\sloppy 
\printbibliography

\end{document}